\newtheorem{The}{Theorem}[section]
\newtheorem{Def}{Definition}[section]
\newtheorem{lem}{Lemma}[section]
\newtheorem{cor}{Corollary}[section]
\newtheorem{Pro}{Proposition}[section]
\newtheorem{que}{Problem}[section]
\newcommand{\La}{\mathscr{L}}
\newcommand{\I}{\mathscr{I}}
\newcommand{\F}{\mathscr{F}}
\newcommand{\D}{\mathscr{D}}
\newcommand{\oo}{\mathbf{0}}
\newcommand{\q}{\mathbf{q}}
\newcommand{\uu}{\mathbf{u}}
\newcommand{\rr}{\mathbf{r}}
\newcommand{\vv}{\mathbf{v}}
\newcommand{\x}{\mathbf{x}}
\newcommand{\ii}{\mathbf{i}}
\newcommand{\y}{\mathbf{y}}
\newcommand{\z}{\mathbf{z}}
\newcommand{\tv}{\mathbf{t}}
\newcommand{\s}{\mathbf{s}}
\newcommand{\A}{\mathscr{A}}
\newcommand{\e}{\epsilon}
\newcommand{\E}{\mathcal{E}}
\newcommand{\Z}{\mathbb{Z}}
\newcommand{\Q}{\mathbb{Q}}
\newcommand{\N}{\mathbb{N}}
\newcommand{\R}{\mathbb{R}}
\newcommand{\Co}{\mathbb{C}}
\newcommand{\relog}{\textrm{Relog}}
\newcommand{\norm}[1]{\left\lVert#1\right\rVert_1}\usepackage{tikz}\usetikzlibrary{automata, positioning, arrows, decorations.pathreplacing,angles,quotes}\tikzset{ 
    >=stealth, 
    node distance=3cm, 
    every state/.style={thick, fill=gray!10}, 
    every edge/.append style={line width=0.25mm}, 
    initial text=$ $, 
    }
\title{Multivariate growth and cogrowth}
\author[1]{Rostislav Grigorchuk}
\author[2]{Jean-Fran\c{c}ois Quint}
\author[3]{Asif Shaikh}
\affil[1]{Department of Mathematics, Texas A\&M University, College Station, TX, USA,
grigorch@math.tamu.edu}
\affil[2]{CNRS—Université Bordeaux I, 33405, Talence, France, jquint@math.u-bordeaux.fr}
\affil[3]{Department of Mathematics \& Statistics, R. A. Podar College of Commerce and Economics, Mumbai, India, asif.shaikh@rapodar.ac.in}
\date{ }
\begin{document}
\maketitle
\begin{abstract}
We investigate  a multivariate growth series $\Gamma_L(\z), \z \in \Co^d$ associated with a regular language $L$ over an alphabet of cardinality $d.$ Our focus is on languages coming from subgroups of the free group and from  subshifts  of  finite  type. We develop a mechanism for computing the rate of growth $\varphi_L(\rr)$ of $L$ in the direction $\rr \in \R^d$. Using the concave growth condition (CG) introduced by the second author in  \cite{quint2002divergence} and the results of Convex Analysis we represent $\psi_L(\rr) = \log\left(\varphi_L(\rr)\right)$ as a support function of a convex set that is a closure of the $\relog$ image  of  the  domain of absolute convergence of $\Gamma_L(\z)$. This allows us to compute $\psi_L(\rr)$ in some important cases, like a Fibonacci language or a language of freely reduced words representing elements of a free group $F_2$. Also we show that the methods of the Large deviation theory can be used as an alternative approach. Finally, we suggest some open problems directed on the possibility of extensions of the results of the first author from  \cite{grigorchuk1980symmetrical}  on multivariate  cogrowth. 
\end{abstract}
{\bf Keywords:} Growth, Cogrowth,  Regular language, Multivariate growth exponent, Free group,
Fibonacci  subshift, Subshift of finite type, Large deviations principle\\
{\bf Mathematics Subject Classification – MSC2020:} 20E05, 20F69, 05A05, 05A15, 05A16, 60F10, 68Q45
\section{Introduction}\label{sec:intro}
The study of asymptotic properties of a sequence $\{\gamma_n\}_{n\geq 0}$ of real (or complex) numbers is related to study of analytic properties of the function $\Gamma(z)$ presented by a power series $\displaystyle\sum_{n=0}^{\infty} \gamma_n z^n.$ This includes inspection of singularities on the border of the domain of (absolute) convergence of $\Gamma(z)$ and local behavior of $\Gamma(z)$ in their neighborhood. If $\Gamma(z)$ is a rational function i.e. $$\Gamma(z) = \displaystyle\frac{G(z)}{H(z)}, ~~G(z), H(z) \in \Co[z]$$ then all needed information about the coefficients $\gamma_n$ can be gained from the polynomials $G(z)$ and $H(z).$\\

In algebra, and especially in modern group theory, there are many notions and concepts that attached to the algebraic object a sequence $\{\gamma_n\}_{n\geq 0}.$ This include growth, cogrowth, subgroup growth etc. Recall that given a finitely generated group $G$ with a system of generators $S,$ one can consider a function
$$\gamma_n = \#\{g \in G : |g| = n\},$$ where $n \in \N$ and $|g|$ is the length of the element $g$ with respect to $S.$ If the pair $(G,S)$ has a regular geodesic normal form (in other terminology a rational cross section \cite{MR895616gilman}) then the power series 
$$\Gamma(z) = \displaystyle\sum_{n=0}^{\infty} \gamma_n z^n$$ represents a rational function and the asymptotic of $\gamma_n$ is either polynomial or exponential. There are many groups (for instance groups of intermediate growth constructed in \cite{gri83milnor}) for which $\Gamma(z)$ is irrational for any system of generators and the 
study of asymptotic properties of $\{\gamma_n\}_{n=0}^{\infty}$ becomes much more complicated. \\

Now let $F_m$ be a free group of rank $m.$ Every $m$ generated group $G$ can be presented as a  quotient $F_m/N,$ for a suitable normal subgroup $N \vartriangleleft F_m.$ Let $A = \{a_1,\cdots,a_m\}$ be a basis of $F_m$. Elements of $F_m$ are presented by freely reduced words over alphabet $\Sigma = \{a_1,\cdots,a_m,a_1^{-1},\cdots,a_m^{-1}\}$ and there is $2m(2m-1)^{n-1}$ such words of length $n\geq 1$. The function
$$\Gamma_{F_m}(z) = \displaystyle\sum_{n=0}^{\infty} 2m(2m-1)^{n-1} z^n = \displaystyle \frac{1+z}{1-(2m-1)z}$$
is a spherical growth function of $F_m$ with respect to the basis $A$. 
Now let $H < F_m$ be a subgroup and $H_n$ be the set of elements in $H$ of length $n$ with respect to generators $\{a_1,\cdots,a_m\}$ of $F_m.$ The sequence $\{|H_n|\}_n^{\infty}$ of cardinalities of these sets is a cogrowth sequence,
$$H(z) = \displaystyle \sum_{n=0}^{\infty} |H_n| z^n$$
is a cogrowth series, and 
$$\alpha_H = \displaystyle \limsup_{n \rightarrow \infty} |H_n|^{\frac{1}{n}}$$ is a cogrowth. The range for $\alpha_H$ is $[1,2m-1]$ and the range of $\alpha_H$ when $H$ is nontrivial and normal subgroup is $\left(\sqrt{2m-1},2m-1\right].$ (See \cite{griMR0474511_1977,gri1978thesis,gri_MR552478_1979,grigorchuk1980symmetrical}). The spectral radius $\chi$ of the simple random walk on $G = F_m/N$, when $N \triangleleft F_m$ is related with $\alpha_N$ as
$$\chi = \displaystyle \frac{\sqrt{2m-1}}{2m}\left(\frac{\sqrt{2m-1}}{\alpha_N} + \frac{\alpha_N}{\sqrt{2m-1}}\right)$$
and the group $G$ is amenable if and only if $\alpha_N = 2m-1$ (that is $\alpha_N$ takes its maximum possible value).\\
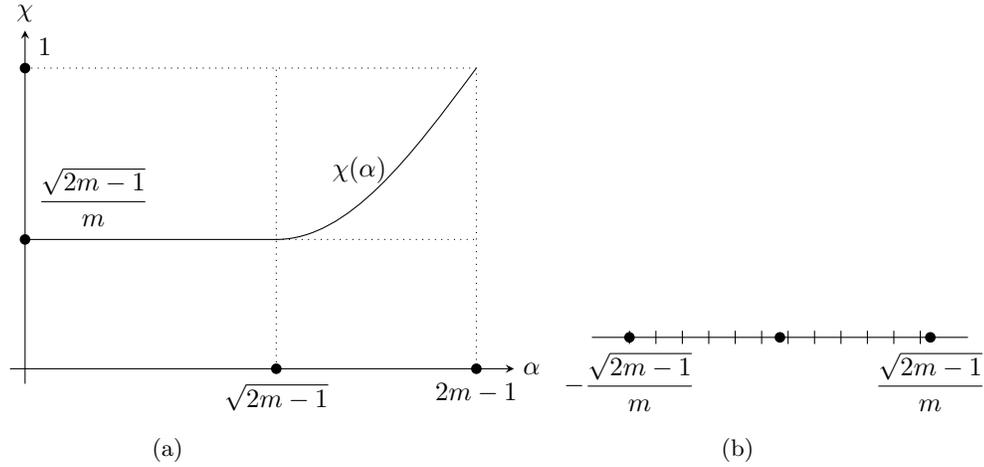
\begin{figure}[!htb]
    \centering
    \begin{subfigure}[b]{.35\textwidth}
    \centering
    \begin{tikzpicture}[domain=0:4]
  \draw[->] (-0.2,0) -- (6.5,0) node[right] {$\alpha$};
  \draw[->] (0,-0.2) -- (0,4.5) node[above] {$\chi$};
  \draw[-] (0,1.72)node[above right=2pt] {$\displaystyle\frac{\sqrt{2m-1}}{m}$} --(3.34,1.72) node[above right=25pt] {$\chi(\alpha)$} cos (6,4);
  \draw[dotted] (3.34,0) node[below=2pt]{$\displaystyle\sqrt{2m-1}$} -- (3.34,4);
  \draw[dotted] (0,4) node[above right=2pt]{$1$} -- (6,4) -- (6,0)node[below=2pt]{$2m-1$} ;
  \draw[dotted]  (3.34,1.72) -- (6,1.72) ;
  \fill (canvas cs:x=3.34cm,y=0cm)    circle (2pt);
  \fill (canvas cs:x=6cm,y=0cm)    circle (2pt);
  \fill (canvas cs:x=0cm,y=1.72cm)    circle (2pt);
  \fill (canvas cs:x=0cm,y=4cm)    circle (2pt);
\end{tikzpicture}
    \caption{}
   \label{subfig:graph_r}
\end{subfigure}
\hfill
\begin{subfigure}[b]{0.4\textwidth}
\centering
    \begin{tikzpicture}[decoration=ticks]
  \draw [decorate] (0,0) -- (4,0);
  \draw[-] (-0.5,0) -- (4.5,0);
  \fill (canvas cs:x=2cm,y=0cm)    circle (2pt);
  \fill (canvas cs:x=0cm,y=0cm)    circle (2pt) node[below=2pt]{$-\displaystyle \frac{\sqrt{2m-1}}{m}$};
  \fill (canvas cs:x=4cm,y=0cm)    circle (2pt) node[below=2pt]{$\displaystyle\frac{\sqrt{2m-1}}{m}$};
\end{tikzpicture}
    \caption{}
    \label{subfig:interval}
\end{subfigure}  
 \caption{The graph of $\chi = \chi(\alpha)$ and the interval $\left[-\displaystyle \frac{\sqrt{2m-1}}{m},\displaystyle \frac{\sqrt{2m-1}}{m}\right]$}
    \label{fig:r}
\end{figure}

In the case when $H < F_m$ is not a normal subgroup, one can consider a Schreier graph $\Lambda = \Lambda(F_m,H,\Sigma).$ Then the dependence on $\alpha_H$ of the spectral radius $\chi$ of a simple random walk on $\Lambda$ is given by
\[ \chi = \left\{
  \begin{array}{lr} 
      \displaystyle \frac{\sqrt{2m-1}}{2m}\left(\frac{\sqrt{2m-1}}{\alpha_H} + \frac{\alpha_H}{\sqrt{2m-1}}\right) & \textnormal{ if } \alpha_H > \sqrt{2m-1} \\
      \displaystyle \frac{\sqrt{2m-1}}{2m} & \textnormal{ if } \alpha_H \leq \sqrt{2m-1}
      \end{array}\right.\]
 (See Figure \ref{subfig:graph_r}). Hence, again the graph $\Lambda$ is amenable if and only if $\alpha_H = 2m-1,$ while in the case when $\Lambda$ is infinite, it is a Ramanujan graph if and only if $\alpha_H \leq 2m-1$. The value $\displaystyle\frac{\sqrt{2m-1}}{m}$ is a spectral radius of random walk on $F_m$ computed by H. Kesten \cite{kesten1959random} and if the spectrum of the Laplacian operator of graph $\Lambda$ minus two points set $\{-1,1\}$ is a subset of the interval given by Figure \ref{subfig:interval}, then the graph is called Ramanujan. Observe that the analogue of the formula for $\chi$ in the context of differential geometry was obtained in \cite{Sullivan_1987_MR882827}.\\
 
 Now we are going to introduce a finer growth characteristics: multivariate growth and multivariate cogrowth.\\
 
 Let $\Sigma = \{a_1,\cdots,a_d\}, d \geq 2$ be an alphabet, $\Sigma^*$ be the set of all finite words (or strings) over $\Sigma.$ The set $\Sigma^*$ with concatenation as a binary operation, can be interpreted as a monoid (with empty word serving as the identity element). In fact, $\Sigma^*$ is a free monoid. Its growth sequence is $d^n, n = 0,1,\cdots.$ Any subset $L \subset \Sigma^*$ is called a (formal) language. With any $w \in \Sigma^*$ we can associate the length $|w|$ and the frequency vector $ \wp(w) = \left(|w|_{a_1},\cdots,|w|_{a_d}\right) \in \N^d$ where $|w|_{a_i}$ is a number of occurrences of the symbol $a_i$ in the word $w.$\\ Let $\z = (z_1,\cdots,z_d) \in \Co^d$ and 

 \begin{equation}
    \Gamma_L(\z) =  \displaystyle \sum_{w \in L} \z^{\wp(w)},  \label{eqn:multivariate_series1}
\end{equation} where $\z^{\wp(w)} = z_1^{|w|_{a_1}}\cdots z_d^{|w|_{a_d}}$. The series \eqref{eqn:multivariate_series1} is a multivariate series associated with $L$ and can be rewritten as 

 \begin{equation}
    \Gamma_L(\z) =  \displaystyle \sum_{\ii \in \N^d} \gamma_{\ii}\z^{\ii},  \label{eqn:multivariate_series2}
\end{equation} where $\gamma_{\ii}$ is the number of words $w$ in $L$ with $\wp(w) = \ii.$ \\

If we normalize the vector $\wp(w)$ as $\displaystyle\frac{1}{|w|}\wp(w)$ we get a vector of frequencies $\Tilde{\wp}(w)$ belonging to the simplex $M_d$ of probability vectors: 
$$M_d = \left\{\rr\in \R_{\geq 0}^d : \rr = (r_1,\cdots,r_d), r_i \geq 0, \displaystyle \norm{\rr} = \sum_{i=1}^d r_i = 1\right\}$$ and $\norm{\cdot}$ is a $l_1$ norm. The multivariate growth \emph{indicatrice} that we are going to introduce is the number $\psi(\rr),$ $\rr \in M_d$ which characterizes the growth of coefficients $\gamma_{\ii}$ when $\norm{\ii} \rightarrow \infty$ in the direction of vector $\rr.$ When $\rr \in \Q^d$ is a rational vector, then the possible approach would be to define $\psi(\rr)$ by 
\begin{equation}
    \psi(\rr) =\displaystyle \limsup_{n \rightarrow \infty} \frac{1}{n} \log|\gamma_{n\rr}| \label{eqn:multigrow_rational}
\end{equation}
 where $n \in \N$ and $\gamma_{n\rr} = 1$ if $n\rr \notin \N^d.$ The Definition \ref{def:psi_def} given in the Section \ref{sec:cond_Q} follows the idea of \cite{quint2002divergence}. It works for arbitrary $\rr \in M_d$ and coincides with \eqref{eqn:multigrow_rational} in the rational case for many examples.  \\
 
 A crucial assumption that we make is the ``concavity'' assumption (CG) (see Definition \ref{def:condition_Q}) which allows to apply the powerful methods from convex analysis (as well as the results from \cite{quint2002divergence}). In fact our definition works for arbitrary multivariate power series \eqref{eqn:multivariate_series2} with real coefficients. The main point is to present the indicatrice of growth $\psi(\rr)$ (assuming the condition (CG)) as
 \begin{equation}
     \psi(\rr) = \displaystyle \inf_{\theta \in \partial(\Omega')} \langle \rr, \theta \rangle \label{eqn:psi_inf1}
 \end{equation}
 where $\Omega' = - \Omega \subset \R^d_{\geq 0}$ and $\Omega$ is a closed convex set representing the $\relog$ image of the domain of absolute convergence of \eqref{eqn:multivariate_series2} where
 $$ \relog(\z) = \left(\log|z_1|,\cdots,\log|z_d|\right)$$ (see Theorem \ref{the:psi_as_support}). We apply \eqref{eqn:psi_inf1} for two languages: the Fibonacci language $L_{Fib}$ and the language $L_{F_m}$ of freely reduced words associated with a free group $F_m$ of rank $m\geq 2$. These languages belong to class of regular languages, that is languages accepted by finite automaton. Regular languages play important role in many areas of mathematics, including dynamical systems and algebra. Regular normal form of elements in the group is a bijective presentation of elements of the group by elements of a regular language over the alphabet of generators and inverses. Regular geodesic normal form is such presentation for which the length of the element with respect to generating set coincides with the length of the word. Virtually abelian groups and Gromov hyperbolic groups have a regular geodesic normal form for any system of generators.\\
 
 Regular languages are good in particular because their growth series (in one or multivariate case) are rational functions. This fact even in stronger form was known already to Chomsky and Sch$\ddot{u}$tzenberger \cite{chomsky1959algebraic}. Proposition \ref{pro:multigrow} of Section \ref{sec:cond_E} gives a rational expression for a multivariate growth series associated with regular language. The condition (CG) mentioned earlier holds under the assumption of ergodicity of the automaton presenting the language (condition (E) in Section \ref{sec:cond_E}). It is satisfied in the presented examples and we summaries the computations from Section \ref{sec:indicator_F_m} and the Propositions \ref{pro:fib_psi} as
 {\The The indicatrices $\psi_{F_2}(\rr)$ and $\psi_{Fib}(\rr)$ 
 are given by
 \begin{enumerate}
     \item $$\psi_{F_2}(\rr) = \mathbf{H}(\rr) + p \displaystyle\log\left(2q-p+2\sqrt{p^2-pq+q^2}\right) $$ 
     $$ + q\displaystyle\log\left(2p-q+2\sqrt{p^2-pq+q^2}\right),$$
     \item 
     \begin{eqnarray}
      \psi_{Fib}(\rr) &=& p \log \displaystyle \left(\frac{p}{p-q}\right) + q\log \displaystyle\left( \frac{p-q}{q}\right)~~~ \textnormal{ if } p \geq \displaystyle\frac{1}{2}, \nonumber\\
      &=& -\infty~~~ \textnormal{ if } p < \displaystyle\frac{1}{2}, \nonumber
\end{eqnarray}
 \end{enumerate}
 where  $ \rr = (p,q) \in M_2$} and $\mathbf{H}(\rr) = -p\log p -q\log q$ is the Shannon's entropy.\\
 
 In fact, $\psi_{F_2}(\rr)$ is computed for modified multivariate growth series $\Delta_{F_2}(\z),$ $ \rr \in M_2$ (see Section \ref{sec:Fm_gr}). The graphics of these functions are presented by Figure \ref{fig:indicatrice}.
 
 \begin{figure}[!htb]
\begin{subfigure}[b]{0.45\textwidth}
\centering
    \includegraphics[width=\textwidth]{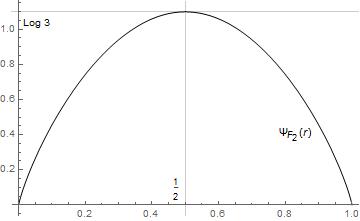}
    \caption{}
   \label{subfig:ind_F2}
\end{subfigure}
\hfill
\begin{subfigure}[b]{0.45\textwidth}
\centering
    \includegraphics[width=\textwidth]{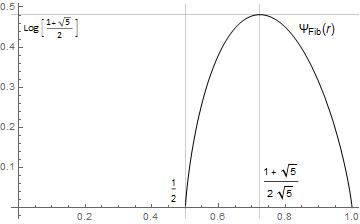}; 
    \caption{}
    \label{subfig:ind_Fib}
\end{subfigure}
\caption{Graphs of $\psi_{F_2}(\rr)$ and $\psi_{Fib}(\rr)$.}
\label{fig:indicatrice}
\end{figure}
 
 Regular languages quite often appear in Dynamical systems, first of all as languages associated with subshifts of finite type (SFT). For example a subshift of a full shift $\Sigma^{\Z}$ determined by a finite set ${\bf F} $ of forbidden words; $\Sigma = \{0,1\}$, ${\bf F} = \{11\}$, is the Fibonacci language. And if $\Sigma = \{a_1,\cdots,a_m,a_1^{-1},\cdots,a_m^{-1}\}$, ${\bf F}  = \{a_ia_i^{-1},a_i^{-1}a_i: i=1,\cdots,m\}$ we get a subshift corresponds to the language $L_{F_m}$ of freely reduced words representing  elements of the free group $F_m$ over $\Sigma$.\\
 
 A standard way to present SFT is to define it by a digraph $\Gamma = (V,E)$ (i.e. a directed graph with allowed loops and multiple edges) or equivalently by a matrix $A$ with non-negative integer entries. The theory of positive matrices based first of all on the powerful Perron-Frobenius theorem allows comprehensive study of SFT from the dynamical and other points of view. There is a canonical way to associate with irreducible SFT an ergodic Markov shift (and a Markov chain on $\Sigma$) given by a stochastic matrix $P.$ Let $(\Delta,\zeta)$ be a SFT where $\Delta \subset \Sigma^{\Z}$ and $\zeta:\Sigma^{\Z} \rightarrow \Sigma^{\Z}$ is a shift map defined by $(\zeta x)_n = x_{n+1}, x \in \Sigma^{\Z}.$ Let $L(\Delta)$ be the language of $(\Delta,\zeta)$ (all finite words that appears as a subwords in $x \in \Sigma^{\Z}$). Let $\psi_{\Delta}(\rr)$ be indicatrice of growth of $L(\Delta)$. The maximum of values of $\psi_{\Delta}(\rr)$ is a \emph{topological entropy} of $L(\Delta)$ \cite{lind1995introduction} and $\psi_{\Delta}(\rr)$ satisfies the relation \eqref{eqn:psi_inf1}, because $L(\Delta)$ is a regular language and the condition (CG) holds. There is an alternative way to present $\psi_{\Delta}(\rr)$ as the support function of a convex set via the methods of Large Deviation Theory (LDT) \cite{zeitouni1998large}. For our purpose it is enough only to apply Sanov's theorem stating that the Large Deviation Principle holds for finite Markov chains. The discussion in Section \ref{sec:LDT} relates formula \eqref{eqn:psi_inf1} with the Sanov's expression for the rate function $I(\rr).$ \\

 The paper is organized as follows. In Section \ref{sec:cond_E}, we recall some of the basic definitions from the theory of finite automata and formal languages that will be needed later. Then we introduce the condition (E) for the regular languages. The Section \ref{sec:Fm_gr} is devoted to the computations of the modified multivariate growth series of language $L_{F_m}$ of reduced elements of a free group $F_m$ using two approaches. In Section \ref{sec:cond_Q}, we discuss the condition (CG) and then prove Theorem \ref{the:psi_as_support}. In Sections \ref{sec:indicator_F_m} and \ref{sec:indicator_fib} we present computations of indicatrice $\psi(\rr)$ for $F_2$ and for the Fibonacci language $L_{Fib}$, respectively. Section \ref{sec:LDT} is devoted to application of Large Deviations Theory. Using Sanov Theorem, we get a relationship between $\psi(\rr)$ and the rate function $I(\rr).$ Using result from asymptotic combinatorics in several variables presented in \cite{melczer2020invitation}, in Section \ref{sec:alter}, we get a finer asymptotics associated with $F_2$. Finally, Section \ref{sec:open} contains concluding remarks and some open questions. 
 
\section{Regular languages, their growth series and the condition (E)}\label{sec:cond_E}

We begin this section with recall of definition of finite automaton and language accepted by it. A finite automaton $\A$ is given by a quintuple $\left(Q,\Sigma,\kappa,q_0,\F\right),$ where $Q$ is a finite set whose elements are called states, $\Sigma$ is a finite alphabet, $\kappa:Q\times\Sigma\rightarrow Q$ is a transition function, the state $q_0 \in Q$ is a special state called the initial state and the set $\F \subset Q$ is nonempty set whose elements are called final states. It is convenient to visualize $\A$ as a labeled directed graph $\Theta_{\A}$ with the vertex set $Q,$ edge set 
$$E = \{(q,s) : q,s \in Q, \kappa(q,a_i) = s \textrm{ for some } a_i \in \Sigma\},$$
and each such edge $(q,s)$ with $\kappa(q,a_i) = s$ is supplied by the label $a_i$. Multiple edges and loops are allowed. The graph $\Theta_{\A}$ is called the diagram of $\A.$ The example of these diagrams are presented by Figures \ref{subfig:unambiguous} and \ref{fig:fibonacci}. Observe that so defined $\A$ is deterministic and complete automaton, i.e. given any $q\in Q$ and any $a \in \Sigma$ we know what would be the next state $\kappa(q,a).$ A word $w \in \Sigma^*$ is accepted by $\A$ if starting with the initial state $q_0$ and traveling in diagram $\Theta_{\A}$ along with the path $p_w$ determined by $w$ we end up at some final state. Let $\La\left(\A\right)$ be the set of words accepted by $\A.$ A language $L \subset \Sigma^*$ is called regular if there is a finite automaton $\A$ such that $L = \La\left(\A\right).$ The important feature of this definition is the uniqueness of the path $p_w$ for each $w \in \Sigma^*.$ \\

One can generalize the above definition by replacing a singleton $\{q_0\}$ by a nonempty subset $\I \subset Q$ whose elements are called initial states and defining $\La\left(\A\right)$ as a set of words $w$ for which there is an initial state $i \in \I$ such that the path $p_{i,w}$ that begins at $i$ and follow the word $w$ ends up at $\F.$ Surprisingly, this does not lead to a larger class of languages (as it is always possible to replace $\A$ by automaton $\A'$ with a single initial state such that $\La\left(\A\right)=\La\left(\A'\right)$). The automaton with a single initial state are unambiguous in the sense that for each $w \in \La\left(\A\right)$ there a unique path $p_w$ that recognizes $w.$ Nevertheless, there are situations (for instance in the case of the language of freely reduced words over the alphabet of generators of a free group) when ambiguous automata work better (see Figure \ref{subfig:ergodic} and Section \ref{sec:indicator_F_m}). \\

One also can consider non-deterministic and incomplete automata. Still, this much larger class of automata determines the same class of languages, the class of regular languages. Non-deterministic automata appear for instance in the study of languages associated with sofic subshifts \cite{lind1995introduction}. 

\begin{figure}[!htb]
\begin{subfigure}[b]{0.2\textwidth}
\centering
    \begin{tikzpicture}
        \node[state, accepting, initial] (q0) {$s_0$};
        \node[state, accepting, above of=q0] (qa) {$s_1$};
        \node[state, accepting, below of=q0] (qa-) {$s_3$};
        \node[state, accepting, left of=q0] (qb) {$s_2$};
        \node[state, accepting, right of=q0] (qb-) {$s_4$};
       
         \draw[->]  (q0) edge[above] node[right]{$a$} (qa)
               (q0) edge[below] node[left]{$a^{-1}$} (qa-)
               (q0) edge[bend left] node[above]{$b$} (qb)
               (q0) edge[bend right] node[above]{$b^{-1}$} (qb-)
               
               (qa) edge[loop above] node[above]{$a$} (qa)
               (qa) edge[bend right] node[above]{$b$} (qb)
               (qa) edge[bend left] node[above right]{$b^{-1}$} (qb-)

               (qa-) edge[loop below] node[below]{$a^{-1}$} (qa-)
               (qa-) edge[bend left] node[below]{$b$} (qb)
               (qa-) edge[bend right] node[below]{$b^{-1}$} (qb-)
               
               (qb) edge[loop left] node[below]{$b$} (qb)
               (qb) edge[above right] node[right]{$a$} (qa)
               (qb) edge[below right] node[above right]{$a^{-1}$}
               (qa-)
               
               (qb-) edge[loop right] node[above=2pt]{$b^{-1}$} (qb-)
               (qb-) edge[above left] node[left]{$a$} (qa)
               (qb-) edge[below left] node[left]{$a^{-1}$}
               (qa-);
    \end{tikzpicture}
    \caption{}
   \label{subfig:unambiguous}
\end{subfigure}
\hfill
\begin{subfigure}[b]{0.35\textwidth}
\centering
    \begin{tikzpicture}
        \node[state, initial, accepting] (q1) {$s_1$};
        \node[state, initial, accepting, right of=q1] (q3) {$s_2$};
        \node[state, initial, accepting, above of=q1] (q2) {$s_4$};
        \node[state, initial, accepting, above of=q3] (q4) {$s_3$};
        \draw  [->] (q1) edge[bend left, left] node{$b^{-1}$} (q2)
                (q1) edge[loop below] node{$a$} (q1)
                (q2) edge[loop above] node{$b^{-1}$} (q2)
                (q4) edge[loop above] node{$a^{-1}$} (q4)
                (q2) edge[bend left, left] node{$a$} (q1)
                (q2) edge[bend left, above] node{$a^{-1}$} (q4)
                (q3) edge[loop below] node{$b$} (q3)
                (q3) edge[bend left, above] node{$a$} (q1)
                (q1) edge[bend left, below] node{$b$} (q3)
                (q4) edge[bend left, above] node{$b^{-1}$} (q2)
                (q4) edge[bend left, right] node{$b$} (q3)
                (q3) edge[bend left, left] node{$a^{-1}$} (q4);
    \end{tikzpicture}
    \caption{}
    \label{subfig:ergodic}
\end{subfigure}
\caption{Diagrams of automata $\A$ and $\A'$ associated with the language $L_{F_2}$ of reduced words representing elements of the free group $F_2$, where the initial states are denoted by horizontal unlabeled arrow and the final states are represented by ``double'' circles.}
\label{fig:ergodic_and_nonergodic}
\end{figure}
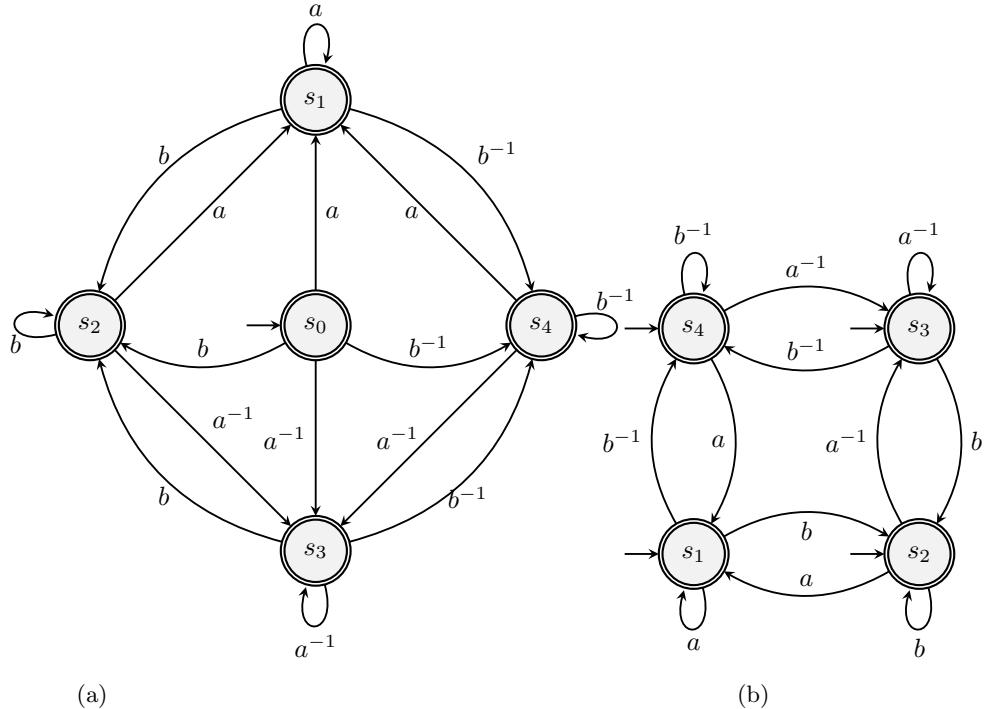

Recall that given $w \in \Sigma^*$ the vector $\wp(w) = \left(|w|_{a_1},\cdots,|w|_{a_d}\right),$ where $|w|_{a_i}$ denote the number of occurrences of $a_i \in \Sigma$ in $w.$ Let also $|w|$ denote the length of $w.$ With $L \subset \Sigma^*$ one can associate a number of formal series: the ordinary series, 
\begin{equation}
    \Gamma_L(z) = \displaystyle \sum_{w \in L} z^{|w|}, ~~z \in \Co \label{eqn:ordinary_series}
\end{equation}
the multivariate series,
\begin{equation}
    \Gamma_L(\z) = \Gamma_L(z_1,\cdots,z_d) = \displaystyle \sum_{w \in L} \z^{\wp(w)},  \label{eqn:multivariate_series}
\end{equation} where $\z = (z_1,\cdots,z_d) \in \Co^d$ and $\z^{\wp(w)} = z_1^{|w|_{a_1}}\cdots z_d^{|w|_{a_d}}$. Also one can consider the formal sum
\begin{equation}
   \displaystyle \sum_{w \in L} wz^{|w|} \in \Z[\Sigma^*]\llbracket z \rrbracket \label{eqn:formal_sum}
\end{equation}
viewed as a formal power series with coefficients in the ring $\Z[w^*]$ (the semi-group ring of the free semi-group $\Sigma^*$). The consideration of these type of series go back at least to 50's of 20th century and is related first of all with the names of Chomsky and Sch$\ddot{u}$tzenberger \cite{chomsky1959algebraic}.\\

For us it is important that in the case when $L$ is a regular language the series \eqref{eqn:ordinary_series}, \eqref{eqn:multivariate_series} and \eqref{eqn:formal_sum} are rational i.e. ratio of two polynomials. We focus our attention only on \eqref{eqn:ordinary_series} and \eqref{eqn:multivariate_series}, and in fact the study of asymptotic properties of \eqref{eqn:multivariate_series} is the main purpose of this article.\\

Let $A = (a_{ij})$ be the adjacency matrix of $\Theta_{\A}$ i.e. a $|Q|\times|Q|$ matrix whose rows and columns correspond to the states and 
$$a_{ij} = \textrm{ the number of edges in }\Theta_{\A} \textrm{ joining } i \textrm{ with } j, \textrm{ where } i,j \in Q.$$
We use the ordering on $Q$ in such a way that the first state is $q_0$. Let  $$u = \left(1,0,\cdots,0\right), \textrm{ and } v = \left(v_1,\cdots,v_{|Q|}\right)^t$$ be a row and column vectors of dimension $|Q|$, where 
$$v_q = 1, \textrm{ if  state } q \in \F \textrm{ and } v_q = 0 \textrm{ if } q \notin \F.$$
Then the standard technique of counting paths (see Theorem 4.7.2 of \cite{stanley2011enumerative}) in finite graph (or in finite Markov chain) leads to the 
$$ \Gamma_L(z) = u\displaystyle\left[\sum_{n\geq 0} (zA)^n\right]v = u \left[I-zA\right]^{-1}v$$
and the rationality of $L(z)$ is obvious. A similar argument works for multivariate case, only the matrix $zA$ should be replaced by 
$$A(\z) = \left(a_{st}(\z)\right)_{s,t=1}^{|Q|},$$ where $\z= (z_1,\cdots,z_d)$ and
 \[  a_{st}(\z) = \left\{
\begin{array}{ll}
      \displaystyle \sum_{i}z_i & \textrm{ where summation is taken over such } i \textrm{ that } \kappa(s,a_i)=t \\
      0 & \textrm{ if there is no edge from }s \textrm{ to }t. \\
\end{array} 
\right. \]
With this notations we have the following.

\begin{Pro}\label{pro:multigrow}
The multivariate growth series of a regular language $L = \La\left(\A\right)$ satisfies
\begin{equation}
    \Gamma_L(\z) = u\left[I-A(\z)\right]^{-1}v. \label{eqn:multigrow}
\end{equation}
\end{Pro}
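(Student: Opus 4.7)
The plan is to adapt the classical transfer-matrix technique for counting paths in a finite directed graph to the weighted, multivariate setting. The key point is that, since $\A$ is deterministic and complete with a single initial state $q_0$, each word $w \in \La(\A)$ corresponds to a \emph{unique} path $p_w$ in $\Theta_{\A}$ starting at $q_0$ and ending at some final state; conversely, every such path reads off a unique accepted word. So counting accepted words with weight $\z^{\wp(w)}$ is the same as summing a suitable weight over these paths.

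First I would prove, by induction on $n \geq 0$, the identity
\begin{equation*}
    \bigl(A(\z)^n\bigr)_{st} \;=\; \sum_{w} \z^{\wp(w)},
\end{equation*}
where the sum ranges over all words $w \in \Sigma^{*}$ of length $n$ labeling a path from state $s$ to state $t$ in $\Theta_{\A}$. The base case $n=0$ gives the Kronecker delta (the empty word contributes $1$ precisely when $s=t$). For the inductive step, matrix multiplication together with the defining formula for $a_{rt}(\z)$ shows that extending a length-$n$ path from $s$ to an intermediate state $r$ by a single edge $r \to t$ labeled $a_i$ multiplies the accumulated weight by exactly $z_i$, which is what $A(\z)$ records.

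Next I would sum over $n\geq 0$. Since every entry of $A(\z)$ is a homogeneous polynomial of degree one in the $z_i$, the series $\sum_{n \geq 0} A(\z)^n$ converges in the $(z_1,\ldots,z_d)$-adic topology on the matrix algebra over $\Co\llbracket z_1,\ldots,z_d\rrbracket$ and equals $[I - A(\z)]^{-1}$. Left-multiplying by $u$ extracts the row indexed by $q_0$, and right-multiplying by $v$ sums over the final states. Combining with the bijection above and the inductive identity yields
\begin{equation*}
    u\,[I-A(\z)]^{-1}v \;=\; \sum_{n\geq 0}\;\sum_{w \in \La(\A),\,|w|=n} \z^{\wp(w)} \;=\; \sum_{w \in \La(\A)} \z^{\wp(w)} \;=\; \Gamma_L(\z),
\end{equation*}
as required. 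No step is really an obstacle: determinism ensures that no path (and hence no word) is double-counted, while the degree-one structure of $A(\z)$ removes any convergence subtlety, so the identity is first an identity of formal power series and then automatically holds on the open polydisk of absolute convergence of $\Gamma_L(\z)$.
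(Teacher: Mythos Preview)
Your proof is correct and follows precisely the approach the paper itself outlines in the paragraph preceding the proposition: the transfer-matrix method (citing Stanley, Theorem~4.7.2) for the one-variable case, with the remark that ``a similar argument works for the multivariate case'' upon replacing $zA$ by $A(\z)$. You have simply spelled out in full the induction on path length and the formal-series summation that the paper leaves implicit.
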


Hence $\Gamma_L(\z), \z \in \Co^d$ is a multivariate rational function i.e. 
$$ \Gamma_L(\z) = \displaystyle\frac{G(\z)}{H(\z)},$$ where $G(\z), H(\z) \in \Co[z_1,\cdots,z_d].$
The polynomials $G(\z), H(\z)$ obtained in \eqref{eqn:multigrow}, can be calculated, for instance, using the Cramer's rule.\\

For example, in the case of automaton presented by Figure \ref{fig:fibonacci},
$$A(\z) =  \left(\begin{array}{cc}
 z_1 & z_2 \\
 z_1 & 0 \\
\end{array}\right) \textrm{ and } \Gamma_L(\z) = \displaystyle \frac{1+z_2}{1-z_1-z_1 z_2}.$$

For us the following condition will be crucial. 

\begin{Def}\label{def:condition_E} (Condition (E)). 
We say that a language $L \subset \Sigma^*$ satisfy the condition (E) if there exists an integer $N \geq 0$ such that, for every $U, V \in L,$ there exists $w \in \Sigma^*$, $|w| \leq N$ with $UwV \in L.$
\end{Def}

The example of the regular languages satisfying the condition (E) come from ergodic (or primitive) automata $\A$ i.e. automata in which any state can be connected to another state by a path in the diagram. 

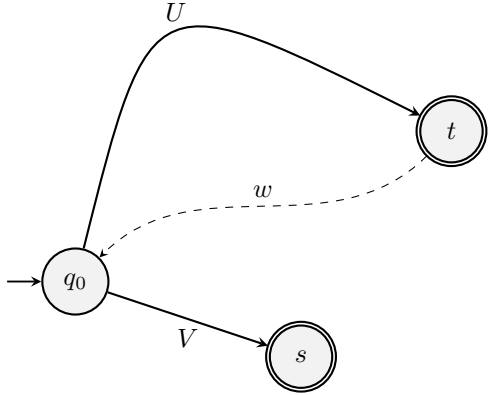
\begin{figure}[!htb]
    \centering
    \begin{tikzpicture}[scale=1]
  
  \node[state, initial, draw] at (0,0) (a) {$q_0$};
  \node[state, accepting] at (3,-1) (b) {$s$};
  \node[state, accepting, draw] at (5,2) (c) {$t$};
  
  \draw[thick,->] (a) -- (b) node[midway, below,black]{$V$};
  \draw[dashed,->] (c)  to[out=225, in=45] node[midway, above]{$w$} (a) ;
  \draw[thick,->] (a).. controls (1,4) .. (c) node[midway, above]{$U$};
\end{tikzpicture}
    \caption{Part of Moore diagram of an ergodic automaton $\A$}
    \label{fig:strongly_connectedness}
\end{figure}

This is because $\A$ is ergodic automaton,  $U, V \in \La\left(\A\right)$ and $s,t$ are end states of the path $p_U, p_V$ then connecting state $s$ with the initial state $q_0$ by some path $q$ whose length does not exceed the diameter $D$ of the graph $\Theta_{\A}$ (i.e. maximum of combinatorial distances between states in $\A$) we get $UwV \in  \La\left(\A\right)$ where $w$ is a word read along path $q,$ $|w| \leq D.$ See Figure \ref{fig:strongly_connectedness}. So the condition (E) holds.\\

Unfortunately, not every regular language satisfying (E) can be accepted by ergodic automaton of the type described above. For instance the language $L_{F_2}$ over the alphabet $\Sigma = \{a,a^{-1},b,b^{-1}\}$ of freely reduced words (i.e. $aa^{-1},a^{-1}a,$ \\ $bb^{-1},b^{-1}b$ are forbidden) of the free group $F_2$ of rank $2$, satisfy the condition (E) but can not be accepted by an ergodic unambiguous automaton \cite{ Tulio2003entropy}. The Figure \ref{fig:ergodic_and_nonergodic} show two automata accepting the language $L_{F_2}.$ The first automaton presented in the Figure \ref{subfig:unambiguous} is unambiguous but not ergodic. In the second automaton \ref{subfig:ergodic}, all states are initial, it is ergodic automaton but not unambiguous. \\

Later, we will introduce a condition (CG) for multivariate power series and it will follow that $\Gamma_L(\z), \z \in \Co^d$ satisfies the condition (CG) if the associated language $L$ satisfies the condition (E). 

\section{Computations of the multivariate growth series in the case of a free group}\label{sec:Fm_gr}
Let $F_m = \langle a_1,\cdots,a_m\rangle$ be a free group of rank $m \geq 2$ and $L_{F_m}$ be the language of freely reduced words over alphabet $\Sigma = \{a_1,\cdots,a_m,a_1^{-1},\cdots,a_m^{-1}\}.$ This is a regular language (an automaton accepting it in the case $m=2$ is given by Figure \ref{subfig:unambiguous}) and hence a corresponding multivariate growth series $\Gamma_{F_m}(\z)$ is rational. It can be computed using for instance Proposition \ref{pro:multigrow}. Even in the case of $m=2$ computations are quite involved. So instead we will focus on a modified version $\Delta_{F_m}(\z)$ defined as 
$$\Delta_{F_m}(\z) = 1 + \displaystyle \sum_{g \in F_m,~g \neq e} \z^{\wp(g)}$$
where $\wp(g) = (|g|_{a_1^{\e}},\cdots,|g|_{a_m^{\e}})$ and $|g|_{a_i^{\e}}$ is a number of occurrences of symbols $a_i$ and $a_i^{-1}$ in the freely reduced word presenting element $g$ i.e. $|g|_{a_i^{\e}} = |g|_{a_i} + |g|_{a_i^{-1}}.$ The modified series $\Delta_{F_m}(\z)$ is also important for us, because of the question \ref{que:chi_varphi} formulated at the end of the article in connection with symmetric random walks. Of course if we know $\Gamma_{F_m}(\z)$, then we can obtain the $\Delta_{F_m}(\z)$ and vice versa.\\

We present the computations of the modified multivariate growth series $\Delta_{F_m}(\z)$, first using the automata approach i.e using the Proposition \ref{pro:multigrow} and later, using the symmetries of this language leading to a simpler system of equations and as a result of simpler expression for $\Delta_{F_m}(\z)$ given by Proposition \ref{pro:delta_Fm}.

\begin{Pro}
\label{pro:Fm_growth}
\begin{equation}
    \Gamma_{F_m}(\z) = \displaystyle\frac{(1+z_1)\cdots (1+z_m)}{R(\z)}  \label{eqn:Fm_2.1}
\end{equation}
where 
   $$ R(\z) = 1-\displaystyle  \left( \sum_{i} z_i + 3\displaystyle\sum_{i < j} z_iz_j + \cdots + (2l-1)\displaystyle
    \sum_{i_1< \cdots < i_l}  \prod_{k=1}^l z_{i_k} \right.  $$
$$\left. + \cdots +(2m-3)\displaystyle\sum_{i_1 < \cdots < i_{m-1}} \prod_{k=1}^{m-1} z_{i_k} +(2m-1)z_1\cdots z_m \right).
$$
\end{Pro}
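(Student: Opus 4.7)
My plan is to apply Proposition~\ref{pro:multigrow} to the unambiguous automaton $\A$ accepting $L_{F_m}$, which generalises Figure~\ref{subfig:unambiguous} in the obvious way: it has $2m+1$ states, namely the initial state $q_0$ and, for each $i\in\{1,\dots,m\}$, two states $s_i^{+}$ and $s_i^{-}$ recording whether the last letter read was $a_i$ or $a_i^{-1}$; every state is final. Weighting every edge that reads $a_i^{\pm1}$ by $z_i$ (as required for the modified series) and ordering the states as $(q_0;\,s_1^{+},s_1^{-},\dots,s_m^{+},s_m^{-})$ yields
$$A(\z)=\begin{pmatrix} 0 & c^{T} \\ 0 & B(\z) \end{pmatrix},\qquad c=(z_1,z_1,z_2,z_2,\dots,z_m,z_m)^{T},$$
where $B(\z)_{(i,\sigma),(j,\tau)}=z_j$ unless $(j,\tau)=(i,-\sigma)$, in which case the entry vanishes. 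Because $s_0$ has no incoming edges, $I-A(\z)$ is block upper-triangular, so Proposition~\ref{pro:multigrow} reduces to
$$\Gamma_{F_m}(\z)\;=\;1+c^{T}\bigl(I-B(\z)\bigr)^{-1}\mathbf{1}.$$

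The central observation is the decomposition $B(\z)=\mathbf{1}c^{T}-E(\z)$, in which $\mathbf{1}c^{T}$ has rank one and $E(\z)$ is block-diagonal with $m$ independent $2\times2$ blocks $\bigl(\begin{smallmatrix}0&z_i\\ z_i&0\end{smallmatrix}\bigr)$. Applying the Sherman--Morrison formula to $(I+E(\z))-\mathbf{1}c^{T}$ collapses the whole inversion to a single scalar quantity, namely
$$c^{T}\bigl(I-B(\z)\bigr)^{-1}\mathbf{1}\;=\;\frac{c^{T}(I+E(\z))^{-1}\mathbf{1}}{1-c^{T}(I+E(\z))^{-1}\mathbf{1}},$$
so $\Gamma_{F_m}(\z)=\bigl(1-c^{T}(I+E(\z))^{-1}\mathbf{1}\bigr)^{-1}$. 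Inverting each $2\times2$ block of $I+E(\z)$ separately, the $i$-th block of $(I+E(\z))^{-1}\mathbf{1}$ equals $(1+z_i)^{-1}(1,1)^{T}$, contributing $2z_i/(1+z_i)$ to the pairing with $c$. Summing over $i$ yields
$$\Gamma_{F_m}(\z)\;=\;\frac{1}{\,1-\sum_{i=1}^{m}\dfrac{2z_i}{1+z_i}\,}\;=\;\frac{\prod_{j=1}^{m}(1+z_j)}{\prod_{j=1}^{m}(1+z_j)\,-\,2\sum_{i=1}^{m}z_i\prod_{j\ne i}(1+z_j)}.$$

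It remains to identify the denominator with $R(\z)$. Expanding $\prod_j(1+z_j)=\sum_{k=0}^{m}e_k(\z)$ in elementary symmetric polynomials and noting that in $2\sum_{i}z_i\prod_{j\ne i}(1+z_j)$ each monomial $\prod_{j\in S}z_j$ with $|S|=k$ is produced exactly $k$ times (once per choice of $i\in S$), one finds that the denominator equals $\sum_{k=0}^{m}(1-2k)\,e_k(\z)$. The resulting coefficient sequence $1,-1,-3,-5,\dots,-(2m-1)$ matches precisely the explicit expression for $R(\z)$ in the statement.

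The main obstacle is keeping the $(2m+1)\times(2m+1)$ inversion tractable and manifestly symmetric in the $z_i$: a brute-force Cramer-style expansion of $I-A(\z)$ is combinatorially opaque and gives no hint of the factorisation $(1+z_1)\cdots(1+z_m)$ in the numerator. The rank-one plus block-diagonal decomposition $B(\z)=\mathbf{1}c^{T}-E(\z)$ is the mechanism that both reduces the calculation to $m$ independent $2\times2$ inversions and directly produces the product $(1+z_1)\cdots(1+z_m)$ as the common denominator of those block inverses.
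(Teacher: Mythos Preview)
Your argument is correct, and it is considerably cleaner than the paper's own proof of this proposition. The paper establishes the formula by a direct Cramer-rule computation: it writes $\Gamma_{F_m}(\z)$ as a ratio of signed minors of $I-A(\z)$, proves by induction on $m$ (via Schur complements and a sequence of explicit row/column operations on a $(2m+1)\times(2m+1)$ matrix) that $\det(I-A(\z))=(1-z_1)\cdots(1-z_m)R(\z)$, and separately computes the alternating sum of minors giving the numerator. Only afterwards, in a second proposition, does the paper derive the compact form $\bigl(1-\sum_i 2z_i/(1+z_i)\bigr)^{-1}$ by an independent first-letter decomposition argument and check that the two expressions agree.

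Your route bypasses all of this. The decomposition $B(\z)=\mathbf{1}c^{T}-E(\z)$ into a rank-one matrix plus an $m$-block-diagonal matrix lets Sherman--Morrison collapse the $2m\times 2m$ inversion to the single scalar $c^{T}(I+E)^{-1}\mathbf{1}=\sum_i 2z_i/(1+z_i)$, yielding the closed form and the numerator factorisation $(1+z_1)\cdots(1+z_m)$ in one stroke; the identification with $R(\z)$ via $\sum_k(1-2k)e_k$ is then immediate. In effect you obtain both of the paper's propositions simultaneously from the matrix formulation, whereas the paper's determinant induction is opaque as to why the $(1+z_i)$ factors appear and has to be supplemented by a separate combinatorial argument to reach the compact expression.
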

\noindent
\begin{proof}
Let $\A_{F_m}$ be an automaton generating language $L_{F_m}$ of reduced elements of $F_m$. See Figure \ref{subfig:unambiguous} for $\A_{F_2}$. 
The $2m+1$ states of $\A_{F_m}$ are $s_0,s_1,\cdots,s_m,s_{m+1}= s_1^{-1},\cdots,s_{2m} = s_m^{-1}$ and the adjacency matrix $A$ of $\A_{F_m}$ is
$$A = \left(
\begin{array}{ccc}
  \oo_{1 \times 1} & J_{1\times m} & J_{1\times m} \\
 \oo_{m\times 1} & J_{m\times m} & (J-I)_{m\times m} \\
 \oo_{m\times 1} &  (J-I)_{m\times m} & J_{m\times m}
\end{array}\right),$$
where $\oo_{m\times n}$ is zero matrix, $J_{m\times n}$ is matrix with all entries are one and \\$(J-I)_{m\times m}$ is matrix having diagonal entries are zero and one elsewhere. Using the above decomposition of $A$, we obtain the transfer matrix $A(\z)$ and then applying Equation \eqref{eqn:multigrow} of the Proposition \ref{pro:multigrow} we obtain
\begin{equation}
    \label{eqn:rational_expn_d_var_growth}
   \Gamma_{F_m}(\z) = \displaystyle \frac{\sum_{i} (-1)^{i+1} \det(I-A(\z):i,1)}{\det\left(I-A(\z)\right)},
\end{equation}
where sum is taken over all final states $s_i$ and if $N$ is matrix then $(N : i,j)$ denotes the matrix obtained by removing the $i$th row and $j$th column of $N$. The matrix $I$ is the $2m+1$ dimensional identity matrix.\\

We divide the rest of the proof in two lemmas. 

\begin{lem} \label{lemma:det(I-Az)}
$$\det\left(I-A(\z)\right) = (1-z_1)\cdots (1-z_m) \times R(\z)$$
\end{lem}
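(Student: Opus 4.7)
First I would read off the transfer matrix $A(\z)$ from the given block decomposition of $A$. Since the modified series groups $a_i$ and $a_i^{-1}$ under the single variable $z_i$, the $J_{m\times m}$ block becomes the rank-one matrix $Z := \mathbf{1}\z^{t}$ (every row equal to $(z_1,\dots,z_m)$), while each $(J-I)_{m\times m}$ block becomes $Z-D$ with $D = \mathrm{diag}(z_1,\dots,z_m)$. Because the first column of $I-A(\z)$ is $(1,0,\dots,0)^{t}$, expanding along it kills the $s_0$ row and column and reduces the problem to evaluating the $2m\times 2m$ determinant
\begin{equation*}
\det\begin{pmatrix} I - Z & -(Z-D) \\ -(Z-D) & I-Z \end{pmatrix}.
\end{equation*}

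Next I would block-diagonalize this matrix. Writing $M = I - Z$ and $N = D - Z$ (so that $-(Z-D)=N$), the determinant-preserving block operations ``add the second block row to the first, then subtract the first block column from the second'' transform the matrix into the block upper-triangular form $\begin{pmatrix} M+N & 0 \\ N & M-N\end{pmatrix}$; no commutativity of $M,N$ is required. Here $M-N = I - D = \mathrm{diag}(1-z_1,\dots,1-z_m)$, which gives the advertised prefactor $\det(M-N) = (1-z_1)\cdots(1-z_m)$ for free, so the lemma reduces to showing
\begin{equation*}
\det(M+N) \;=\; \det\bigl(I+D-2Z\bigr) \;=\; R(\z).
\end{equation*}

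Finally, since $Z = \mathbf{1}\z^{t}$ has rank one, $I+D-2Z$ is a rank-one update of the diagonal matrix $I+D$, so the matrix determinant lemma yields
\begin{equation*}
\det\bigl(I+D-2Z\bigr) \;=\; \prod_{i=1}^{m}(1+z_i)\;-\;2\sum_{i=1}^{m}z_i\prod_{j\ne i}(1+z_j).
\end{equation*}
Expanding each product as a sum over subsets $S\subseteq\{1,\dots,m\}$, the first term contributes $\sum_{S}\prod_{i\in S}z_i$ and the second contributes $2\sum_{S\ne\emptyset}|S|\prod_{i\in S}z_i$ (each subset $T$ of size $\ell$ appears $\ell$ times in $\sum_i \sum_{S'\subseteq\{1,\dots,m\}\setminus\{i\}} z_i\prod_{j\in S'}z_j$). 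Their difference collapses to $1 - \sum_{\ell\ge 1}(2\ell-1)\sum_{|S|=\ell}\prod_{i\in S}z_i$, which is exactly $R(\z)$. The main obstacle is purely combinatorial bookkeeping in this last step — correctly identifying the weighted block structure of $A(\z)$ and tracking subset multiplicities; the linear algebra itself (cofactor expansion, block operations, matrix determinant lemma) is entirely elementary.
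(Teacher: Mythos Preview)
Your proof is correct and genuinely different from the paper's. The paper proceeds by induction on $m$: it isolates the two rows/columns indexed by $s_{m}, s_{m}^{-1}$, applies a Schur complement to peel off a $2\times 2$ block, and then reduces the remaining $2(m-1)\times 2(m-1)$ block to the inductive hypothesis through a sequence of explicit row/column manipulations. Your argument is direct: you recognize the $2m\times 2m$ block as the symmetric pattern $\bigl(\begin{smallmatrix} M & N \\ N & M\end{smallmatrix}\bigr)$ and use the standard factorization $\det(M+N)\det(M-N)$, which immediately isolates $\prod_i(1-z_i)$; then, because $Z=\mathbf{1}\z^{t}$ has rank one, the matrix determinant lemma gives $\det(I+D-2Z)=\prod_i(1+z_i)\bigl(1-2\sum_i z_i/(1+z_i)\bigr)$, whose subset expansion is exactly $R(\z)$. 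This last identity is in fact the one the paper derives separately in Proposition~\ref{pro:delta_Fm} by the ``system of equations'' method, so your route shows directly why the automaton computation and that alternative computation agree. Your approach is shorter and more conceptual (it exploits the $a_i\leftrightarrow a_i^{-1}$ symmetry in one stroke), while the paper's induction, though heavier, has the virtue of not requiring one to spot the rank-one structure.
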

\begin{proof}
We prove the lemma using induction on $m \geq 2$. It is easy to see that the statement is true when $m=2$. i.e. 
$$\det\left(I-A(\z)\right) = (1-z_1)(1-z_2)(1-z_1-z_2-3z_1z_2).$$ 
Assume that the statement is true when $m=k$. i.e.
$$\det\left(I-A(\z)\right) = (1-z_1)\cdots (1-z_k) \times$$
$$ \left(1-\displaystyle  \left( \sum_{i} z_i + 3\displaystyle\sum_{i < j} z_iz_j + \cdots + \left(2l-1)\right)\displaystyle
    \sum_{i_1< \cdots < i_l} \prod_{j=1}^l z_{i_j} \right.\right.  $$
$$\left.\left. + \cdots +(2k-3)\displaystyle\sum_{i_1< \cdots < i_{k-1}} \prod_{j=1}^{k-1} z_{i_j} +(2k-1)z_1\cdots z_k \right)\right).
$$
Let $m=k+1$. Then $I-A(\z)$
$$ = \left(
\begin{array}{c|ccc|ccc}
1 & -z_1 & \cdots & -z_{k+1} & -z_1 & \cdots & -z_{k+1} \\
\hline
0 &1 -z_1 & \cdots & -z_{k+1} & 0 & \cdots & -z_{k+1} \\ 
\vdots &\vdots & \ddots & \vdots & \vdots & \ddots & \vdots \\
0 & -z_1 & \cdots & 1-z_{k+1} & -z_1 & \cdots & 0 \\
\hline
0 & 0 & \cdots & -z_{k+1} & 1-z_1 & \cdots & -z_{k+1} \\ 
\vdots &\vdots & \ddots & \vdots & \vdots & \ddots & \vdots \\
0 & -z_1 & \cdots & 0 &- z_1 & \cdots & 1-z_{k+1} \\
\end{array}\right)$$
We rearrange the rows and columns of $I-A(\z)$ by interchanging the positions of $(k+1)$th row and column with $\{k+2,\cdots,2k+1\}$th rows and columns, respectively. 
Then $I-A(\z)$
$$= \left(
\begin{array}{c|cccccc|cc}
1 & -z_1 & \cdots & -z_k & -z_1 & \cdots & -z_k & -z_{k+1}& -z_{k+1} \\
\hline
0 & 1 -z_1 & \cdots & -z_k & 0 & \cdots & -z_k & -z_{k+1} & -z_{k+1} \\ 
\vdots &\vdots & \ddots & \vdots & \vdots & \ddots & \vdots & \vdots & \vdots \\
0 & -z_1 & \cdots & 1-z_k & -z_1 & \cdots & 0 & -z_{k+1} & -z_{k+1} \\
0 & 0 & \cdots & -z_k & 1-z_1 & \cdots & -z_k & -z_{k+1} & -z_{k+1} \\ 
\vdots &\vdots & \ddots & \vdots & \vdots & \ddots & \vdots & \vdots & \vdots \\
0 & -z_1 & \cdots & 0 & -z_1 & \cdots & 1-z_k & -z_{k+1} & -z_{k+1} \\
\hline
0 & -z_1 & \cdots & -z_k & -z_1 & \cdots & -z_k & 1-z_{k+1} & 0 \\
0 & -z_1 & \cdots & -z_k & -z_1 & \cdots & -z_k & 0 & 1-z_{k+1} \\
\end{array}\right)$$
Observe that the rows and columns of $I-A(\z)$ are indexed as $s_0, s_1,\cdots,s_k,s_1^{-1},$\\$\cdots,s_k^{-1},s_{k+1},s_{k+1}^{-1}$ and the determinant of the central block and the expression that we have assumed when $m=k$ are identical. As first column has exactly one 1 and zero elsewhere, we can ignore the first row and first column. Hence $\det\left(I-A(\z)\right)$
$$ = \det\left(
\begin{array}{cccccc|cc}
 1 -z_1 & \cdots & -z_k & 0 & \cdots & -z_k & -z_{k+1} & -z_{k+1} \\ 
\vdots & \ddots & \vdots & \vdots & \ddots & \vdots & \vdots & \vdots \\
 -z_1 & \cdots & 1-z_k & -z_1 & \cdots & 0 & -z_{k+1} & -z_{k+1} \\
 0 & \cdots & -z_k & 1-z_1 & \cdots & -z_k & -z_{k+1} & -z_{k+1} \\ 
\vdots & \ddots & \vdots & \vdots & \ddots & \vdots & \vdots & \vdots \\
 -z_1 & \cdots & 0 & -z_1 & \cdots & 1-z_k & -z_{k+1} & -z_{k+1} \\
\hline
-z_1 & \cdots & -z_k & -z_1 & \cdots & -z_k & 1-z_{k+1} & 0 \\
-z_1 & \cdots & -z_k & -z_1 & \cdots & -z_k & 0 & 1-z_{k+1} \\
\end{array}\right) $$
We rewrite the matrix by naming these blocks as
$$\det\left(I-A(\z)\right)= \det\left( \begin{array}{cc}
  A_{2k\times 2k}  & B_{2k\times 2} \\
  C_{2\times 2k}  & D_{2\times 2}
\end{array}\right)$$
Recall that $z_i \neq 1$, for all $i = 1,\cdots,k+1$ and hence $\det(D_{2\times 2}) \neq 0$. Therefore we have
$$\det\left(I-A(\z)\right)= \det\left( \begin{array}{cc}
  A'_{2k\times 2k}  & B_{2k\times 2} \\
   \oo_{2\times 2k} & D_{2\times 2}
\end{array}\right) = \det( A'_{2k\times 2k})\det(D_{2\times 2}),$$
where $A' = A_{2k\times 2k}-B_{k\times 2}D_{2\times 2}^{-1}C_{2\times k}$ and its $(i,j)$th entry 
$$ A'(i,j) = \left\{
\begin{array}{ll}
      1-z_i-\displaystyle\frac{2z_iz_{k+1}}{1-z_{k+1}} & \textrm{ if  } i = j \nonumber\\
      -\displaystyle\frac{2z_iz_{k+1}}{1-z_{k+1}} & \textrm{ if  } i < j \textrm{ and  } j = i+k(\bmod{~2k})\nonumber\\
      -\displaystyle\frac{2z_iz_{k+1}}{1-z_{k+1}} & \textrm{ if  } j < i \textrm{ and  } i = j+k(\bmod{~2k})\nonumber\\
      -z_i-\displaystyle\frac{2z_iz_{k+1}}{1-z_{k+1}} & \textrm{ otherwise.} \nonumber
\end{array} 
\right.$$
In order to prove the $m=k+1$ step, it is suffices to show that 
$$\det(A') = \displaystyle\frac{(1-z_1)\cdots (1-z_k)}{1-z_{k+1}}\times$$
$$\left(1-\sum_{i} z_i - 3\sum_{i< j} z_iz_j - \cdots -(2k-1)\sum_{i_1< \cdots < i_k} \prod_{j=1}^k z_{i_j}-(2k+1)z_1\cdots z_{k+1}\right).$$
Applying below rows and columns transformations 
\begin{enumerate}[noitemsep]
    \item $R_i-R_1, i=2,\cdots,2k$
    \item $C_i-C_{k+i}, i = 1,\cdots,k$
    \item $R_i+R_1, i = 2,\cdots,k,k+2,\cdots,2k$ and $R_{k+1}+2R_1$
    \item $R_{k+i}+R_i, i = 2,\cdots,k$
    \item $R_i+R_1, i = 2,\cdots,k$ and $R_i+2R_1, i = k+1,\cdots,2k$
\end{enumerate}
 to the matrix $A'$, we convert $A'$ to the below upper triangular block matrix. $\det(A')$
$$ = \det\left( \begin{array}{ccc|ccc}
 1-z_1 & \cdots & 0 & -\frac{2z_1z_{k+1}}{1-z_{k+1}} & \cdots & -z_1-\frac{2z_kz_{k+1}}{1-z_{k+1}}\\
\vdots & \ddots & \vdots & \vdots & \ddots & \vdots \\
0 & \cdots & 1-z_k & -z_k-\frac{2z_1z_{k+1}}{1-z_{k+1}} & \cdots & -\frac{2z_kz_{k+1}}{1-z_{k+1}}\\
\hline
0 & \cdots & 0 & 1-z_1-\frac{4z_1z_{k+1}}{1-z_{k+1}} & \cdots & 2\left(-z_k-\frac{2z_kz_{k+1}}{1-z_{k+1}}\right)\\
\vdots & \ddots & \vdots & \vdots & \ddots & \vdots \\
0 & \cdots & 0 & 2\left(-z_1-\frac{2z_1z_{k+1}}{1-z_{k+1}}\right) & \cdots & 1-z_k-\frac{4z_kz_{k+1}}{1-z_{k+1}}\\
\end{array}\right)$$
We call the bottom right corner block as $A''$. In $A''$ matrix, subtracting twice the first row from each row $R_i, i = 2,\cdots, k$, we get
$$\det(A'') = \det\left(\begin{array}{cccc}
 \frac{1-z_1-z_{k+1}-3z_1z_{k+1}}{1-z_{k+1}} & \frac{-2z_2(1+z_{k+1})}{1-z_{k+1}} & \cdots & \frac{-2z_k(1+z_{k+1})}{1-z_{k+1}}\\
-1-z_1 & 1+z_2 &\cdots & 0 \\
\vdots & \vdots & \ddots & \vdots \\
-1-z_1 & 0 & \cdots & 1+z_k \\
\end{array}\right)$$
$$ = \left(1-\sum_{i} z_i - 3\sum_{i< j} z_iz_j - \cdots -(2k-1)\sum_{i_1< \cdots < i_k} \prod_{j=1}^k z_{i_j} -(2k+1)z_1\cdots z_{k+1}\right).$$
This implies that the statement is true for $m=k+1$ and hence the lemma is proved.
\end{proof}
\begin{lem} \label{lemma:minor_sum}
$$\sum_{i} (-1)^{i+1} \det(I-A(\z):i,1)= (1-z_1^2)\cdots(1-z_m^2).$$
\end{lem}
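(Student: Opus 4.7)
The plan is to reduce the alternating sum of minors to a single determinant and then exploit the $s_i \leftrightarrow s_i^{-1}$ symmetry of the automaton $\A_{F_m}$ to simplify it via block row and column operations.

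First, I would observe that by cofactor expansion along the first column,
$\sum_{i}(-1)^{i+1}\det\bigl((I-A(\z)):i,1\bigr) = \det(M),$
where $M$ is obtained from $I-A(\z)$ by replacing its first column with the all-ones vector $(1,1,\ldots,1)^t$. (This is simply because the cofactor $(-1)^{i+1}\det\bigl((I-A(\z)):i,1\bigr)$ is what is multiplied by the $i$th entry of the first column of $M$, and those entries are all $1$.) So the lemma reduces to computing $\det(M)$.

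Second, I would exploit the involution swapping $s_i$ with $s_i^{-1}$. Writing $M$ in block form with rows/columns ordered as $s_0,\,s_1,\ldots,s_m,\,s_1^{-1},\ldots,s_m^{-1}$ and using $D=\mathrm{diag}(z_1,\ldots,z_m)$, one has
\[
M=\begin{pmatrix} 1 & -z^t & -z^t \\ \mathbf{1}_m & I_m-J_mD & -(J_m-I_m)D \\ \mathbf{1}_m & -(J_m-I_m)D & I_m-J_mD \end{pmatrix}.
\]
For each $i=1,\ldots,m$ I would subtract row $s_i$ from row $s_i^{-1}$, then add column $s_i$ to column $s_i^{-1}$. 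A short computation (using $J_mD=\mathbf{1}_m z^t$) shows the result is
\[
M''=\begin{pmatrix} 1 & -z^t & -2z^t \\ \mathbf{1}_m & I_m-J_mD & B \\ \mathbf{0}_m & D-I_m & \mathbf{0}_{m\times m} \end{pmatrix},
\quad B=I_m+D-2J_mD.
\]
These operations preserve the determinant.

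Third, I would apply Laplace expansion along the bottom $m$ rows of $M''$. Because those rows have nonzero entries only in columns $s_1,\ldots,s_m$, where they form the diagonal matrix $D-I_m$, exactly one of the $\binom{2m+1}{m}$ terms is nonzero: the one selecting those $m$ columns. This contributes a factor $\det(D-I_m)=(-1)^m\prod_i(1-z_i)$ times a Laplace sign $(-1)^{\sigma(R)+\sigma(C)}$ with $R=\{m+2,\ldots,2m+1\}$ and $C=\{2,\ldots,m+1\}$, which a direct arithmetic check shows equals $(-1)^m$.

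Finally, I would compute the complementary minor on rows $\{1,\ldots,m+1\}$ and columns $\{1,m+2,\ldots,2m+1\}$, which is
$\det\begin{pmatrix} 1 & -2z^t \\ \mathbf{1}_m & B \end{pmatrix}=\det\!\bigl(B+2\mathbf{1}_m z^t\bigr)=\det(I_m+D)=\prod_i(1+z_i),$
using the Schur complement formula together with the identity $2\mathbf{1}_m z^t=2J_mD$, which exactly cancels the $-2J_mD$ term in $B$. Assembling everything,
\[
\det(M)=\det(M'')=(-1)^m\cdot(-1)^m\prod_i(1-z_i)\cdot\prod_i(1+z_i)=\prod_i(1-z_i^2),
\]
as claimed. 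The main obstacle I expect is bookkeeping of signs in the Laplace expansion, together with verifying carefully that the row/column operations combine the two symmetric blocks in a way that truly decouples the bottom $m$ rows.
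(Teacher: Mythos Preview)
Your proposal is correct and constitutes a genuinely different route from the paper's proof. The paper proceeds by computing each individual first-column minor separately---claiming (essentially by the same inductive block manipulations used in Lemma~\ref{lemma:det(I-Az)}) that for $i=1,\ldots,m$ the cofactors corresponding to rows $s_i$ and $s_i^{-1}$ both equal $z_i(1-z_i)\prod_{j\neq i}(1-z_j^2)$, and then summing together with the $s_0$ term. By contrast, you bypass the termwise analysis entirely: the observation that the alternating sum of first-column minors equals $\det(M)$ for $M$ obtained by replacing the first column with $\mathbf{1}$ is clean and immediately reduces the problem to a single determinant. Your $s_i\leftrightarrow s_i^{-1}$ row and column operations then exploit the symmetry to kill the bottom-right block, after which Laplace expansion and a one-line Schur complement finish the job. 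This avoids the induction on $m$ altogether and makes the sign bookkeeping (which is somewhat glossed over in the paper's sketch) explicit and verifiable. The paper's approach has the minor advantage of giving the individual minors as a byproduct, but yours is shorter, more self-contained, and arguably more transparent.
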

\begin{proof}
Using similar strategy as in the above proof, we can show that
\begin{eqnarray}
    \det(I-A(\z):i+1,1) &=& \det(I-A(\z):i+m+1,1) \nonumber \\
    &=& z_i(1-z_i)\prod_{j\neq i}(1-z_j^2)\nonumber
\end{eqnarray}
This implies $$\sum_{i} (-1)^{i+1} \det(I-A(\z):i,1)= (1-z_1^2)\cdots(1-z_m^2).$$
\end{proof}
Hence, the proof of Proposition \ref{pro:Fm_growth} follows from Lemmas \ref{lemma:det(I-Az)} and \ref{lemma:minor_sum}.
\end{proof}
We now provide an alternative way of computations. 
\begin{Pro} \label{pro:delta_Fm}
The function $\Delta_{F_m}(\z)$ satisfies
    \begin{equation}
    \Delta_{F_m}(\z) = \displaystyle \frac{1}{1-2 \displaystyle \sum_{i=1}^m \frac{z_i}{1+z_i}}.  \label{eqn:Fm_system}
\end{equation}
\end{Pro}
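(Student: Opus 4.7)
The plan is to exploit the symmetry $a_i \leftrightarrow a_i^{-1}$ (which preserves $\wp$) and a first-letter decomposition, in order to avoid the heavy matrix computation of Proposition \ref{pro:Fm_growth}.

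For each $i \in \{1,\dots,m\}$, let $S_i(\z)$ denote the contribution to $\Delta_{F_m}(\z)$ coming from nonempty reduced words whose first letter is $a_i$ or $a_i^{-1}$; then trivially
\[
\Delta_{F_m}(\z) \;=\; 1 + \sum_{i=1}^m S_i(\z).
\]
Every reduced word starting with $a_i$ is of the form $a_i \cdot w$, where $w$ is either empty or a nonempty reduced word not starting with $a_i^{-1}$. The set of such $w$ decomposes as the empty word, words starting with some $a_j^{\pm 1}$ for $j \neq i$ (contributing $\sum_{j\neq i} S_j$), and words starting with $a_i$ itself. By the involution $a_i \leftrightarrow a_i^{-1}$ (which fixes $\wp$), exactly half of $S_i$ corresponds to words starting with $a_i$, so the words starting with $a_i$ contribute $z_i \bigl(1 + \sum_{j\neq i} S_j + \tfrac{1}{2} S_i\bigr)$, and the same expression describes those starting with $a_i^{-1}$.

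Adding the two, I would obtain the key identity
\[
S_i \;=\; 2 z_i\Bigl(1 + \sum_{j\neq i} S_j + \tfrac{1}{2} S_i\Bigr) \;=\; 2z_i\,\Delta_{F_m}(\z) \;-\; 2z_i\, S_i + z_i S_i,
\]
which rearranges to $S_i(1+z_i) = 2 z_i\,\Delta_{F_m}(\z)$, hence $S_i = \dfrac{2z_i}{1+z_i}\,\Delta_{F_m}(\z)$. Substituting this into $\Delta_{F_m} = 1 + \sum_i S_i$ gives
\[
\Delta_{F_m}(\z)\Bigl(1 - 2\sum_{i=1}^m \frac{z_i}{1+z_i}\Bigr) \;=\; 1,
\]
which is exactly \eqref{eqn:Fm_system}.

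The only delicate step is the first-letter decomposition: one must be careful that ``reduced word not starting with $a_i^{-1}$'' is the correct tail after an initial $a_i$, and that the $a_i \leftrightarrow a_i^{-1}$ symmetry really halves $S_i$ with respect to the first letter (this uses that $\wp$ does not distinguish $a_i$ from $a_i^{-1}$, which is what makes this modified series $\Delta_{F_m}$ so much more tractable than $\Gamma_{F_m}$). Once this combinatorial step is set up, the rest is a one-line algebraic manipulation, so this approach bypasses entirely the induction of Lemmas \ref{lemma:det(I-Az)} and \ref{lemma:minor_sum}.
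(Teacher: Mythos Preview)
Your proposal is correct and follows essentially the same approach as the paper: a first-letter decomposition yielding a linear system that collapses via the $a_i \leftrightarrow a_i^{-1}$ symmetry. The only cosmetic difference is that the paper keeps the two series $\Delta_i^{+1}$ and $\Delta_i^{-1}$ separate, writes the two recursions $\Delta_i^{\pm 1} = z_i\bigl(\Delta_{F_m} - \Delta_i^{\mp 1}\bigr)$, and then adds them, whereas you bundle them into $S_i = \Delta_i^{+1} + \Delta_i^{-1}$ from the outset and invoke the symmetry to halve; both routes land on $S_i(1+z_i) = 2z_i\,\Delta_{F_m}$ and finish identically.
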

\begin{proof}
Recall that the elements of $F_m$ are identified with freely reduced words over the alphabet $\Sigma = \{a_1,\cdots , a_m,a_1^{-1},\cdots,a_m^{-1}\}.$ 
$$\Delta_{F_m}(\z) = \displaystyle 1 + \sum_{e\neq g \in F_m} \z^{\wp(g)}$$
where $e$ is the identity in $F_m$, $\z = (z_1,\cdots,z_m), \wp(g) = (|g|_{a_1^{\e}},\cdots,|g|_{a_m^{\e}})$ and 
$\z^{\wp(g)} = z_1^{|g|_{a_1^{\e}}}\cdots z_m^{|g|_{a_m^{\e}}}$.\\
For each $ i = 1,\cdots,m, \e = \pm 1$, let $F_{m,i}^{\e} = \{g \in F_m \backslash \{e\} : \textrm{ the first letter of } g \textrm{ is } a_i^{\e} \}$. Define 
$$\Delta_i^{\e}(\z) = \Delta_{F_{m,i}}^{\e}(\z) = \sum_{e\neq g \in F_{m,i}^{\e}} \z^{\wp(g)}$$
so that we get $$ \Delta_{F_m}(\z) = 1 + \sum_{i,\e} \Delta_i^{\e}(\z).$$
Observe that $$\Delta_i(\z) = z_i + z_i\Delta_i(\z) + z_i \sum_{j\neq i, \e} \Delta_j^{\e}(\z).$$ 
This implies
$$ \Delta_i(\z) = z_i \left( \Delta_{F_m}(\z) - \Delta_i^{-1}(\z)\right).$$
Similarly we can write 
$$ \Delta_i^{-1}(\z) = z_i \left( \Delta_{F_m}(\z) - \Delta_i(\z)\right).$$
Adding $\Delta_i(\z)$ and $\Delta_i^{-1}(\z)$ we get
$$ \Delta_i(\z) + \Delta_i^{-1}(\z) = \displaystyle\left(\frac{2z_i}{1+z_i}\right)\Delta_{F_m}(\z),$$
Hence $$\Delta_{F_m}(\z) = 1 + \displaystyle \sum_i \left(\Delta_i(\z) + \Delta_i^{-1}(\z)\right) = 1 + 2 \sum_{i=1}^m \left(\frac{z_i}{1+z_i}\right)\Delta_{F_m}(\z)$$ 
and we come to \eqref{eqn:Fm_system}.
\end{proof}
We now compare two expressions for $\Delta_{F_m}(\z).$ 
Multiplying numerator and denominator of \eqref{eqn:Fm_system} by $(1+z_1)\cdots (1+z_i)$ we get
\begin{equation}
    \Delta_{F_m}(\z) = \displaystyle \frac{(1+z_1)\cdots (1+z_m)}{\left((1+z_1)\cdots (1+z_m)\right)-2\left((1+z_1)\cdots (1+z_m)\right)\sum_{i=1}^m \frac{z_i}{1+z_i}}\label{eqn:sys_Fm_grow}
\end{equation}
It is easy to see that
$$(1+z_1)\cdots (1+z_m) = 1 + \sum_i z_i + \sum_{i_2 < i_1} z_{i_1}z_{i_2} + \cdots + \displaystyle \sum_{i_1<\cdots< i_l} \prod_{k=1}^l z_{i_k} + \cdots + \prod_{i=1}^m z_i$$
and
$$2\left((1+z_1)\cdots (1+z_m)\right)\sum_{i=1}^m \frac{z_i}{1+z_i}$$
$$= 2\sum_i z_i + 4\sum_{i_2 < i_1} z_{i_1}z_{i_2} + \cdots + 2l\sum_{i_1<\cdots< i_l} \prod_{k=1}^l z_{i_k} + \cdots + 2m \prod_{i=1}^m z_i$$
Substituting above relations in the denominator of \eqref{eqn:sys_Fm_grow} we get $R(\z)$ of \eqref{eqn:Fm_2.1}.
\section{The multivariate growth exponent and the \\condition (CG)}\label{sec:cond_Q}
Let 
\begin{equation}
    \Gamma(\z) = \displaystyle\sum_{\ii \in \N^d} f_{\ii} \z^{\ii} 
    = \displaystyle\sum_{i_1,\cdots,i_d} f_{i_1,\cdots,i_d} z_1^{i_1}\cdots z_d^{i_d} \label{eqn:multigrow_def}
\end{equation}
be a multivariate power series. We denote by $\D$ the interior of the domain of absolute convergence of $\Gamma$, which we assume to be non empty. In particular, $0$ belongs to $\D$.\\

We are going to define for each $\rr \in M_d$ a growth exponent $\varphi(\rr)$ for coefficients $\displaystyle\{f_{\ii}\}_{\ii \in \N^d}$ in the direction of $\rr.$ When $\rr$ is a vector with rational entries, then one can define $\varphi(\rr)$ as
\begin{equation}
    \varphi(\rr) = \displaystyle \limsup_{n \rightarrow \infty} |f_{n\rr}|^{\frac{1}{n}}, \label{eqn:varphi_for_rational_r}
\end{equation}
where only coefficients $n\rr = (nr_1,\cdots,nr_d) \in \N^d$ with integer entries are taken into account. This approach is considered in the book  \cite{melczer2020invitation} and some of its references.\\

To define $\varphi(\rr)$ for arbitrary $\rr \in M_d$ following \cite{quint2002divergence}, we first define the function $\psi(\rr)$. We then hope to be able put $\varphi(\rr) = e^{\psi(\rr)}$, which would then agree with \eqref{eqn:multigrow_rational}.
\begin{Def}\label{def:psi_def}
Let $C \subset \R^d$ be an open (linear) cone. Define
\begin{equation}
    \tau_C = \displaystyle \limsup_{R\rightarrow\infty} \frac{1}{R} ~\log\left(\sum_{\substack{\ii \in C\\ R\leq \norm{\ii} \leq R+1}} f_{\ii}\right) \label{eqn:tau_def}
\end{equation}
and for $\rr \in \R^d, \rr \neq \oo$
\begin{equation}
    \psi(\rr) = \displaystyle\norm{\rr} \inf_{\rr \in C} \tau_C \label{eqn:psi_tau_def}
\end{equation}
where $\inf$ is taken over open cones containing $\rr.$ Also set $\psi(\oo) = \oo.$ 
\end{Def}

We call $\psi(\rr)$ $indicatrice$ of growth. The assumption that $0$ is an interior point of the domain of absolute convergence of $\Gamma$ ensures that $\psi$ can not take the value $+\infty$, but it may take the value $-\infty.$ \\

Recall that the $\relog$ map is a map $\relog: \Co_*^d \rightarrow \R^d,$ where $\Co_* = \Co\backslash\{0\}$ given by 
$$\relog(\z) = (\log|z_1|,\cdots,\log|z_d|).$$
In future, we will use the notation $\E = \R^d$ and $\E^*$ for its dual space i.e. the space  of continuous linear functionals $\theta:\E \rightarrow \R$ with a natural identification of $\E^*$ with $\E$ via the standard inner product 
$$\langle \x, \theta \rangle = \theta(\x) = \displaystyle\sum_{i=1}^d x_i\theta_i,$$ where $\x = (x_1,\cdots,x_d) \in \E$, $\theta = (\theta_1,\cdots,\theta_d) \in \E^*.$

The height function $h_{\rr} (\z)$ for $\z \in \E$ is the function 
$$h_{\rr}(\z) = - \displaystyle\sum_{i=1}^d r_i\log|z_i| = - \langle \rr, \relog(\z)\rangle$$
if $\rr \in M_d\cap\Q^d$ is a rational vector then for \eqref{eqn:varphi_for_rational_r} there is an upper bound
\begin{equation}
   \varphi(\rr) = \displaystyle\limsup_{n \rightarrow \infty} |f_{n\rr}|^{\frac{1}{n}} \leq |z'_1|^{-r_1}\cdots |z'_d|^{-r_d} = e^{-h_{\rr}(\z')} \label{eqn:varphi_interms_hr}
\end{equation}
which hold for every point $\z'$ in the closure of $\overline{\D}$ of $\D$, as shown for instance in (\cite{melczer2020invitation}, formula (5.15)). We shall provide sufficient conditions for replacement of the inequality in \eqref{eqn:varphi_interms_hr} by the equality for properly chosen $\z'$ and extend the definition of $\varphi(\rr)$ to irrational $\rr$ by $\varphi(\rr) = e^{\psi(\rr)}$. The function $\varphi(\rr)$ will be called the multivariate  growth exponent.\\

For what follow it will be convenient to associate with the power series $\Gamma(\z)$ a Radon measure $\nu = \nu_{\Gamma}$ defined on Borel subsets $S$ of $\E$ by 
\begin{equation}
    \nu(S) = \displaystyle \sum_{\ii \in S} f_{\ii} \label{eqn:radon}
\end{equation}
We shall now define the condition (CG).
\begin{Def}\label{def:condition_Q}
(Condition (CG)) Following \cite{quint2002divergence} we say that $\Gamma(\z)$ has a concave growth of coefficients if there are $a,b,c > 0$ such that for all $\x, \y \in \E$ 
\begin{equation}
    \nu\left(B_{\x+\y}(a)\right) \geq c \nu\left(B_{\x}(b)\right)\nu\left(B_{\y}(b)\right) \label{eqn:concavity_Q_condition}
\end{equation}
where $B_{\x}(a)$ is the ball of radius $a$ with the center at $\x \in \E$ for the norm $\norm{.}$.
\end{Def}
A large class of power series $\Gamma(\z)$ satisfying the condition (CG) are generating series of regular languages satisfying the condition (E), as explained in section \ref{sec:cond_E}.\\

Given a Radon measure $\nu$ satisfying condition (CG) one defines for a given open cone $C \subset \E$
\begin{eqnarray}
    \tau_{\nu, C} = \displaystyle \limsup_{R \rightarrow \infty} \displaystyle\frac{1}{R} \log \nu \biggl( \Bigl(B_{\oo}(R+1)\smallsetminus B_{\oo}(R)\Bigr)\cap C \biggr), \nonumber
\end{eqnarray}
and $$ \psi_{\nu}(\x) = \norm{\x} \inf_{\x \in C} \tau_{\nu, C} $$
for $\x \in \E$, with $ \psi_{\nu}(\oo) = \oo.$  Then assuming that $\tau_{\nu} < \infty$, where 
\begin{equation}
    \tau_{\nu} = \displaystyle \sup_{\x \in \E, \norm{\x} = 1} \psi_{\nu}(\x) \label{eqn:tau_as_psi_nu}
\end{equation}
we know from (\cite{quint2002divergence}, Lemma 3.1.7)  that the function $\psi_{\nu} : \E \rightarrow \R\cup \{-\infty\}$ is upper semi-continuous. Obviously it is positively homogeneous  i.e. for any $t>0$
$$\psi_{\nu}(t\x) = t\psi_{\nu}(\x).$$
Finally, if the Condition (CG) holds, then $\psi_{\nu}(\x)$ is concave i.e.
$$\psi_{\nu}(\x+\y) \geq \psi_{\nu}(\x) + \psi_{\nu}(\y), \textrm{ for } \x,\y \in \E.$$
Observe that in our situation, because the measure $\nu = \nu_{\Gamma}$ is supported only on the $\N^d \subset \R_{\geq 0}^d $ part of $\E,$ 
$$\psi_{\nu}(\x) = - \infty, \textrm{ for } \x \notin \R_{\geq 0}^d.$$

Let $\Omega = \relog(\D\cap\mathbb C^d_*) \subset \E$ (recall that $\D \subset \Co^d$ is the interior of the domain of absolute convergence of \eqref{eqn:multigrow_def}). It is well known (\cite{melczer2020invitation}, Proposition 3.4) that $\Omega$ is convex. Let $\psi = \psi_{\Gamma}$ be the function given by Definition \ref{def:psi_def}. 
\begin{The}\label{the:psi_as_support}
Let $\Gamma(\z)$ be a power series given by the Equation \eqref{eqn:multigrow_def} with non-negative coefficients $f_{\ii}, \ii \in \N^d$ such that $f_{\ii}$ satisfy the concavity condition (CG) and $\tau_{\nu} < \infty$ where $\nu = \nu_{\Gamma}$ is given by the Equation \eqref{eqn:radon} and $\tau_{\nu}$ is given by \eqref{eqn:tau_as_psi_nu}. Let $\D \subset \Co^d$ be the interior of the set of points of absolute convergence of $\Gamma(\z),$  
$\Omega = \relog (\D\cap\mathbb C^d_*)$ and $\overline{\Omega}$ be the closure of $\Omega.$ Then $- \psi_{\Gamma}(\x)$ is the support function of the closure $\overline{\Omega}$ and for $\x \in \R_{\geq 0}^d$ 
\begin{eqnarray}
    \psi_{\Gamma}(\x) &=& \displaystyle \inf_{\theta \in -\overline{\Omega}} \langle \x, \theta \rangle \label{eqn:psi_as_inf} \\
               &=& \displaystyle \inf_{\theta \in -\partial\overline{\Omega}} \langle \x, \theta \rangle \label{eqn:psi_as_inf_on_boundary}
               \mbox{ if }\Omega\neq\mathbb R^d.
\end{eqnarray}
\end{The}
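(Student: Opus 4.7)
The plan is to establish \eqref{eqn:psi_as_inf} through two opposite inequalities and then derive the boundary formula \eqref{eqn:psi_as_inf_on_boundary} from it. I write $\sigma(\x)=-\psi_\Gamma(\x)$, so that \eqref{eqn:psi_as_inf} is equivalent to the identity $\sigma=h_{\overline{\Omega}}$ on $\R_{\geq 0}^d$, where $h_{\overline{\Omega}}(\x)=\sup_{\theta\in\overline{\Omega}}\langle \x,\theta\rangle$ is the support function of $\overline{\Omega}$.

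First I would prove the upper bound $\psi_\Gamma(\x)\leq -\langle \x,\theta\rangle$ for every $\theta\in\Omega$ and $\x\in\R_{\geq 0}^d$. Non-negativity of the $f_{\ii}$ and absolute convergence of $\Gamma(e^\theta)=\sum_{\ii}f_\ii e^{\langle \ii,\theta\rangle}$ give the termwise bound $f_\ii\leq \Gamma(e^\theta)\,e^{-\langle \ii,\theta\rangle}$. For an open cone $C\ni \x$, summing over $\ii$ in the shell and bounding the number of lattice points polynomially yields
\[
\nu\bigl((B_\oo(R+1)\setminus B_\oo(R))\cap C\bigr)\;\leq\;P(R)\,\Gamma(e^\theta)\exp\!\Bigl(-R\inf_{\uu\in C\cap M_d}\langle \uu,\theta\rangle\Bigr).
\]
Taking $\log$, dividing by $R$, passing to $\limsup$, and then shrinking $C$ to the ray through $\x$ produces $\psi_\Gamma(\x)\leq -\langle \x,\theta\rangle$; continuity of $\theta\mapsto -\langle \x,\theta\rangle$ extends this to all $\theta\in\overline{\Omega}$, so $\sigma\geq h_{\overline{\Omega}}$.

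For the reverse inequality, the (CG) hypothesis and the results of \cite{quint2002divergence} already quoted guarantee that $\psi_\Gamma$ is positively homogeneous, upper semi-continuous and concave on $\R_{\geq 0}^d$ (and $-\infty$ elsewhere), so $\sigma$ is a proper closed sublinear function on $\E$. By Fenchel--Moreau, $\sigma$ is the support function of the unique closed convex set $K=\{\theta\in \E^*:\langle \x,\theta\rangle\leq \sigma(\x)\text{ for all }\x\}$; Step~1 gives $\overline{\Omega}\subseteq K$. To show $K\subseteq \overline{\Omega}$, pick $\theta\in K$ and $\epsilon>0$ and set $\theta'=\theta-\epsilon(1,\dots,1)$. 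Then $\psi_\Gamma(\x)+\langle \x,\theta'\rangle\leq -\epsilon\norm{\x}$ on $\R_{\geq 0}^d$. Under (CG), the $\nu$-mass at scale $R$ in direction $\uu$ is essentially $e^{R\psi_\Gamma(\uu)+o(R)}$ (the concentration estimate underlying Lemma~3.1.7 of \cite{quint2002divergence}), and integrating this bound against $e^{\langle \cdot,\theta'\rangle}$ gives $\Gamma(e^{\theta'})<\infty$, so $\theta'\in \Omega$ and hence $\theta\in\overline{\Omega}$. This yields $K=\overline{\Omega}$ and proves \eqref{eqn:psi_as_inf}.

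Finally, for \eqref{eqn:psi_as_inf_on_boundary}, I use that $\D$ is a complete Reinhardt domain containing a neighborhood of $0$, whence $\Omega$ is coordinatewise downward-closed: $\theta'\leq \theta$ and $\theta\in \overline{\Omega}$ imply $\theta'\in\overline{\Omega}$. Since $\Omega\neq\R^d$, for any $\theta_0\in \overline{\Omega}$ the ray $\theta_0+t(1,\dots,1)$ must leave $\overline{\Omega}$ at some $t^*\geq 0$, landing on $\partial\overline{\Omega}$; for $\x\in\R_{\geq 0}^d$ the functional $\langle \x,\cdot\rangle$ is non-decreasing along this ray, so the supremum defining $h_{\overline{\Omega}}(\x)$ is attained on $\partial\overline{\Omega}$, which translates into the boundary formula. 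The main obstacle of the whole argument is the reverse inclusion $K\subseteq\overline{\Omega}$ in the second step: soft convex analysis alone yields only $\overline{\Omega}\subseteq K$, and equality genuinely requires the quantitative concentration provided by (CG), which is what lifts the pointwise bound $\psi_\Gamma(\x)+\langle \x,\theta'\rangle\leq -\epsilon\norm{\x}$ into convergence of the full series $\Gamma(e^{\theta'})$.
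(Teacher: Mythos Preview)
Your proof is correct and mirrors the paper's: both identify $-\psi_\Gamma$ as the support function of the closed convex set $K=\{\theta:\psi_\Gamma(\x)\leq -\langle\x,\theta\rangle\ \forall\x\}$ (using concavity from (CG)) and then establish $K=\overline{\Omega}$ via the convergence/divergence criterion for $\int e^{\langle\x,\theta\rangle}\,d\nu_\Gamma$, which the paper cites directly as \cite[Lemma~3.1.3]{quint2002divergence}; your ray argument for \eqref{eqn:psi_as_inf_on_boundary}, exploiting that $\Omega$ is coordinatewise downward-closed, is in fact more carefully justified than the paper's one-line appeal to ``maximum on the boundary''.

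One small correction to your closing commentary: the implication ``$\psi_\Gamma(\x)+\langle\x,\theta'\rangle\leq -\epsilon\norm{\x}$ for all $\x$ $\Rightarrow$ $\Gamma(e^{\theta'})<\infty$'' does \emph{not} require (CG)---it follows from the cone definition of $\psi_\Gamma$ by a compactness argument on $M_d$ (this is exactly the content of Lemma~3.1.3, not 3.1.7). Where (CG) is genuinely indispensable is earlier, in making $\psi_\Gamma$ concave; without that, $-\psi_\Gamma$ need not equal $h_K$ at all, and the whole identification of the support set with $\overline{\Omega}$ would collapse.
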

\begin{proof}
Because of the Condition (CG), the function $-\psi_{\Gamma}(\x)$ is a lower semi-continuous, convex and positively homogeneous. Hence, it is a support function of a one and only one closed convex subset $S \subset \E^*$ given by  
\begin{eqnarray}
            S  &=& \displaystyle \left\{\theta \in \E^* : -\psi_{\Gamma}(\x) \geq \langle \x, \theta \rangle, \forall \x \in \E\right\} \nonumber \\
               &=& \displaystyle \left\{\theta \in \E^* : \psi_{\Gamma}(\x) \leq -\langle \x, \theta \rangle, \forall \x \in \E\right\} \label{eqn:psi_with_theta} 
\end{eqnarray}
The domain of absolute convergence of $\Gamma(\z)$ is determined by the condition that the integral given below in the Equation \eqref{eqn:multigrow_as_integral} is convergent.
\begin{eqnarray}
    \Gamma(|z_1|,\cdots,|z_d|)  &=& \displaystyle \sum_{\ii} f_{\ii} e^{\sum_{k=1}^d i_k \log|z_k|} \nonumber \\
               &=& \displaystyle \int_{\E} e^{\langle \x, \theta \rangle} d\nu_{\Gamma}(\x) \label{eqn:multigrow_as_integral}
\end{eqnarray}
where $\theta = \relog(\z), \z = (z_1,\cdots,z_d).$\\

Consider the set
$$\Delta^{\circ}_{\Gamma} = \left\{\theta \in \E^* : -\langle \x, \theta \rangle > \psi_{\Gamma}(\x), \forall \x \in \E\backslash\{\oo\}\right\}.$$
Using (\cite{quint2002divergence}, Lemma 3.1.3), we conclude that if $\theta \in \Delta^{\circ}_{\Gamma}$, then the integral \eqref{eqn:multigrow_as_integral} converges while if there is $\x \in \E\backslash\{\oo\}$ with $-\langle\x,\theta\rangle < \psi_{\Gamma}(\x),$ then it diverges. The closure $\Delta_{\Gamma} = \overline{\Delta^{\circ}_{\Gamma}}$ is the set
$$ \Delta_{\Gamma} = \left\{\theta \in \E^* : -\langle \x, \theta \rangle \geq \psi_{\Gamma}(\x), \forall \x \in \E\right\}$$
which is the set $S$ given by the Equation \eqref{eqn:psi_with_theta} for whom $-\psi_{\Gamma}(\x)$ is the support function. Thus
\begin{eqnarray}
    -\psi_{\Gamma}(\x) &=& \displaystyle \sup_{\theta \in S} \langle \x, \theta \rangle \nonumber \\
    \psi_{\Gamma}(\x)  &=& \displaystyle -\sup_{\theta \in S} \langle \x, \theta \rangle \nonumber \\
                &=& \displaystyle \inf_{\theta \in -S} \langle \x, \theta \rangle \nonumber
\end{eqnarray}
Also, we conclude that $$S = \relog(\overline{\D}) = \overline{\Omega}. $$ This leads us to the Equation \eqref{eqn:psi_as_inf} and eventually to \eqref{eqn:psi_as_inf_on_boundary} as the maximum of a linear functional taken over the closed convex set $S$ is achieved on the boundary $\partial S$ of $S$ or equals to $-\infty.$  
\end{proof}
Following is the immediate consequence of Theorem \ref{the:psi_as_support}.
\begin{cor}\label{cor:psi_as_support}
    \begin{equation}
        \psi(\x) = \displaystyle \inf_{\z \in \partial\D} \left(-\sum x_i\log|z_i| \right) \label{eqn:psi_as_inf_on_dabaD}
    \end{equation}
\end{cor}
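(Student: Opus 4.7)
The corollary is essentially a translation of Theorem \ref{the:psi_as_support} from the $\relog$-image side back to the original coordinates on $\D\subset\Co^d$, so the plan is to unwind the parametrization $\theta=\relog(\z)$ and check that the two infima agree.

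First I would start from equation \eqref{eqn:psi_as_inf} (or \eqref{eqn:psi_as_inf_on_boundary}) of Theorem \ref{the:psi_as_support}, which gives
$$\psi(\x)=\inf_{\theta\in-\overline{\Omega}}\langle\x,\theta\rangle.$$
Writing $\theta=-\relog(\z)$ for $\z\in\D\cap\Co_*^d$ turns $\langle\x,\theta\rangle$ into $-\sum_i x_i\log|z_i|$. Using the definition $\Omega=\relog(\D\cap\Co_*^d)$ and the continuity of $\relog$ on $\Co_*^d$, one gets
$$\psi(\x)=\inf_{\z\in\overline{\D}\cap\Co_*^d}\Bigl(-\sum_i x_i\log|z_i|\Bigr),$$
and the infimum is actually attained (or approached) on the boundary $\partial\D$ of the convergence domain, since the integrand is monotone in each $|z_i|$ and the expression strictly decreases as we push any $|z_i|$ outward while staying in $\overline{\D}$. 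This uses the classical fact (stated just before Theorem \ref{the:psi_as_support}) that the absolute convergence domain is logarithmically convex and is a complete Reinhardt domain, so $\z\in\D$ and $|z_i'|\leq|z_i|$ implies $\z'\in\D$.

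Next I would handle the boundary points of $\D$ that have some coordinate equal to $0$. For $\x\in\R^d_{\geq 0}$, if $x_i>0$ then any $\z$ with $|z_i|\to 0$ makes $-x_i\log|z_i|\to+\infty$, hence cannot come close to the infimum; so without loss of generality we can restrict to $\z\in\partial\D\cap\Co_*^d$. If on the other hand $x_i=0$, the corresponding coordinate of $\z$ drops out of the expression entirely, and the convention $0\cdot(-\infty)=0$ is harmless. In either case the restriction $\z\in\partial\D$ gives the same infimum as $\z\in\overline{\D}\cap\Co_*^d$.

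Finally, the correspondence $\partial\D\leftrightarrow\partial\overline{\Omega}$ via $\relog$ is the key geometric input: $\relog$ carries $\partial\D\cap\Co_*^d$ onto $\partial\overline{\Omega}$ (with possibly extra recession-direction points on the $\overline{\Omega}$ side that correspond precisely to the $x_i=0$ coordinates handled in the previous step). Substituting $\theta=-\relog(\z)$ into \eqref{eqn:psi_as_inf_on_boundary} then yields exactly
$$\psi(\x)=\inf_{\z\in\partial\D}\Bigl(-\sum_i x_i\log|z_i|\Bigr),$$
which is \eqref{eqn:psi_as_inf_on_dabaD}. The only delicate point I expect is the bookkeeping at the recession directions of $\overline{\Omega}$ (i.e.\ the coordinate-axis behavior of $\relog$ on $\partial\D$), but the monotonicity argument above together with the Reinhardt structure of $\D$ handles it cleanly.
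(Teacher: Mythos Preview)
Your proposal is correct and follows the same approach as the paper: the paper states only that this is ``the immediate consequence of Theorem \ref{the:psi_as_support}'' and gives no further argument, so your unwinding of the substitution $\theta=-\relog(\z)$ together with the boundary bookkeeping is exactly what is being invoked, just spelled out in more detail than the paper does.
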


{\bf Example}\\
Let $d =2, \Sigma = \{a_1,a_2\}$ and $L = \Sigma^*$ be the language of a free monoid. Then  
$$\Gamma_L(\z) = \displaystyle\frac{1}{1-z_1-z_2} = \displaystyle \sum_{n=0}^{\infty}\sum_{i=0}^{n} \binom ni z_1^i z_2^{n-i}.$$ 
It is obvious that the Condition (CG) holds and the direct computation based on the use of Stirling's formula or Theorem \ref{the:psi_as_support} show that for $\rr \in M_2$ the function $\psi_L(\x)$ is the Shannon's entropy $\mathbf{H}(\rr)$. i.e.
$$\psi_L(\rr) = \mathbf{H}(\rr) = - r_1\log r_1 - r_2\log r_2, \rr \in M_2.$$
More examples will be discussed in the following two sections.

\section{Multivariate growth exponent in the case of the free group} \label{sec:indicator_F_m}

In the section \ref{sec:Fm_gr}, we got two expressions for the multivariate growth series $\Gamma_{F_m}(\z)$ for the language $L_{F_m}$ of freely reduced words in the alphabet $\{a_1,\cdots,a_m,a_1^{-1},$\\$\cdots,a_m^{-1}\}.$ \\

Consider the case $m=2.$ Then 
\begin{equation}
    \Delta_{F_2}(\z) = \displaystyle \frac{(1+z_1)(1+z_2)}{1-z_1-z_2-3z_1z_2} \label{eqn:multigrow_F2}
\end{equation}
We use the notations $\D, \Omega,$ $\relog$ of Section \ref{sec:cond_Q}. First, we are mainly interested in understanding the shape of the set $\Omega = \relog(\D).$
To get a view of the real slice of the domain of the absolute convergence of the power series \eqref{eqn:multigrow_F2}, we observe that the domain $\D$ is described as
$$\D = \{ (z_1,z_2) \in \Co^2 : |z_1|+|z_2|+3|z_1||z_2| < 1\}$$
The real slice of the curve 
$$H(z_1,z_2) = f_1(\z) = 1-z_1-z_2-3z_1z_2 = 0$$ is presented by hyperbola in Figure \ref{subfig:D_F2}.

\begin{figure}[!htb]
\begin{subfigure}[b]{0.45\textwidth}
\centering
    \includegraphics[width=\textwidth]{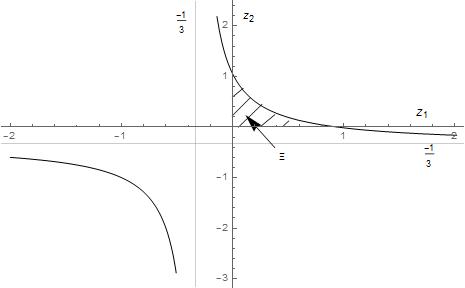}
    \caption{}
   \label{subfig:D_F2}
\end{subfigure}
\hfill
\begin{subfigure}[b]{0.45\textwidth}
\centering
    \includegraphics[width=\textwidth]{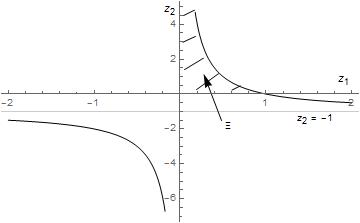}
    \caption{}
    \label{subfig:D_Fib}
\end{subfigure}
\caption{Real part of the Domain $\D$ of $H(z_1,z_2)$ of $L_{F_2}$ and $L_{Fib}$, respectively.}
\label{fig:domain}
\end{figure}

The real slice of $\D$ with $z_1, z_2 \in \R, z_1,z_2 \geq 0$ is presented by the tented area $\Xi.$ The set $-\Omega$ is obtained from $\Xi$ by making the substitution $z_1 = e^{-s}, z_2 = e^{-t}.$ To get the clear picture of the shape of the set $\Omega$ (and hence $-\Omega$) we make use of an interesting notion from algebraic geometry called $amoeba$. 
\begin{figure}[!htb]
\begin{subfigure}[b]{0.475\textwidth}
\centering
    \includegraphics[width=\textwidth]{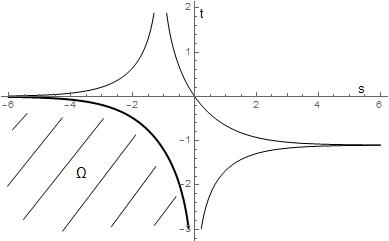}
    \caption{}
   \label{subfig:Amoeba_F2}
\end{subfigure}
\hfill
\begin{subfigure}[b]{0.475\textwidth}
\centering
    \includegraphics[width=\textwidth]{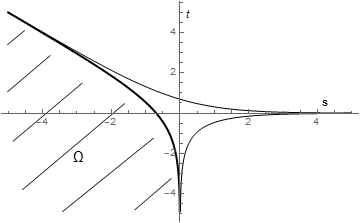}
    \caption{}
    \label{subfig:Amoeba_Fib}
\end{subfigure}
\caption{The $amoeba(f_1)$, $amoeba(f_2)$ along with sets $\Omega$ associated to the language $L_{F_2}$ and the Fibonacci language are presented in \ref{subfig:Amoeba_F2} and \ref{subfig:Amoeba_Fib}, respectively, where $f_1(\z) = 1-z_1-z_2-3z_1z_2$ and $f_2(\z) = 1-z_1-z_1z_2$.}
\label{fig:amoeba}
\end{figure}

Recall, that given a Laurent polynomial $f(\z), \z \in \Co^d,$ the amoeba of f is the set 
$$ amoeba(f) = \left\{ \relog(\z) : \z \in \Co_*^d , f (z) = 0 \right\} \subset \R^d, $$
where $\Co_* = \Co \backslash \{0\}$. The amoeba’s complement is $amoeba(R)^c = \R^d\backslash amoeba(R).$\\

The following result follows from Gelfand, Kapranov, and Zelevinsky \cite[Chap. 6, Prop. 1.5 and Cor. 1.6]{gelfand2008discriminants}. 
\begin{Pro}\label{pro:Gelfand}
If $f(\z)$ is a Laurent polynomial then all connected components of the complement $amoeba(f)^c$ are convex subsets of $\R^d.$ These real convex subsets are in bijection with the Laurent series expansions of the rational function $\frac{1}{f (z)}.$ When $\frac{1}{f (z)}$ has a power series expansion, then it corresponds to the component of $\R^d \backslash amoeba(f)$ containing all points $(-N,\cdots,-N)$ for $N$ positive and sufficiently large.
\end{Pro}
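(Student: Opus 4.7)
The plan is to associate to each connected component $C$ of $\R^d\setminus amoeba(f)$ a unique Laurent expansion of $1/f$, and to deduce both the convexity and the bijection from the log-convexity of domains of absolute convergence of Laurent series.

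Fix a component $C$ and pick $\theta\in C$. Since $f$ is non-vanishing on the polytorus $T_\theta=\{\z\in\Co_*^d:|z_i|=e^{\theta_i}\}$, the function $1/f$ is holomorphic on an open polyannular neighborhood of $T_\theta$, where it admits a unique Laurent expansion $\sum_{\alpha\in\Z^d}c_\alpha(\theta)\,\z^\alpha$ whose coefficients are given by Cauchy integrals over $T_\theta$. Since the integrand is a closed holomorphic $d$-form on $\Co_*^d\setminus f^{-1}(0)$, these integrals are invariant under isotopy of the torus within $C$ (by Stokes' theorem); hence the series $\sum c_\alpha(C)\,\z^\alpha$ depends only on the component.

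The key step is convexity. The set of $\theta\in\R^d$ on which a Laurent series $\sum c_\alpha\z^\alpha$ converges absolutely at $|z_i|=e^{\theta_i}$ is log-convex: if the sums $\sum|c_\alpha|e^{\alpha\cdot\theta}$ and $\sum|c_\alpha|e^{\alpha\cdot\theta'}$ are both finite, then for any $t\in[0,1]$ H\"older's inequality yields
\[
\sum|c_\alpha|\,e^{\alpha\cdot(t\theta+(1-t)\theta')}=\sum\bigl(|c_\alpha|\,e^{\alpha\cdot\theta}\bigr)^t\bigl(|c_\alpha|\,e^{\alpha\cdot\theta'}\bigr)^{1-t}\leq\Bigl(\sum|c_\alpha|\,e^{\alpha\cdot\theta}\Bigr)^t\Bigl(\sum|c_\alpha|\,e^{\alpha\cdot\theta'}\Bigr)^{1-t}.
\]
On its convergence domain the series equals $1/f$, so $f$ does not vanish on the corresponding polytori; the domain is therefore an open convex subset of $\R^d\setminus amoeba(f)$ containing $C$. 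Since $C$ is a connected component of this complement, the two sets must coincide, establishing convexity.

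For the bijection, if two distinct components produced the same Laurent series, its convex domain of absolute convergence would join them through a region disjoint from the amoeba, contradicting distinctness; conversely, every Laurent expansion of $1/f$ has a log-convex polyannulus of convergence contained in a single component. When $f(0)\neq 0$, the ordinary Maclaurin series of $1/f$ converges for all $|z_i|$ sufficiently small, that is, for $\theta$ with every coordinate sufficiently negative, so its component is the one containing $(-N,\ldots,-N)$ for large $N$. The main obstacle is the well-definedness claimed in the first step: verifying that the Laurent coefficients $c_\alpha(\theta)$ are constant along $C$ requires the isotopy invariance of the Cauchy integrals, which must be justified by a careful application of Stokes' theorem to a homotopy of tori within $\Co_*^d\setminus f^{-1}(0)$.
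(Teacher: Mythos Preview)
The paper does not prove this proposition: it is simply quoted from Gelfand, Kapranov and Zelevinsky \cite[Chap.~6, Prop.~1.5 and Cor.~1.6]{gelfand2008discriminants}, with no argument given. Your write-up is therefore strictly more than what the paper provides, and it is in fact essentially the GKZ argument itself: associate to a component the Laurent expansion obtained from Cauchy integrals over a polytorus in $\relog^{-1}(C)$, use homotopy invariance of these integrals to see that the coefficients depend only on $C$, and then invoke log-convexity of domains of absolute convergence (your H\"older step) to conclude that the convergence region of this series is a convex set trapped between $C$ and $amoeba(f)^c$, hence equal to $C$.

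One small point you could make explicit: to pass from ``the series equals $1/f$ near the chosen torus'' to ``the series equals $1/f$ on its entire domain of absolute convergence'' (which is what forces that domain to avoid the amoeba), you are implicitly using the identity theorem on the connected Reinhardt domain $\relog^{-1}(D)$, applied to the holomorphic function $f\cdot(\sum c_\alpha \z^\alpha)-1$. Once that is said, the inclusion $D\subset amoeba(f)^c$ is clean, and since $D$ is convex, connected, and contains $C$, the equality $D=C$ follows. The homotopy-invariance step you flag as the ``main obstacle'' is genuine but standard: the Cauchy kernel $\z^{-\alpha-\mathbf 1}/f(\z)\,dz_1\wedge\cdots\wedge dz_d$ is a closed holomorphic $d$-form on $\Co_*^d\setminus f^{-1}(0)$, so its integral over $T_\theta$ is locally constant in $\theta$ on $C$.
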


The techniques of drawing amoeba are well developed. The amoebas of polynomials $f_1(\z) = 1-z_1-z_2-3z_1z_2$ and $f_2(\z) = 1-z_1-z_1z_2$ that correspond to the cases of language $L_{F_2}$ of freely reduced words of $F_2$ and Fibonacci language, respectively, are presented in the Figure \ref{fig:amoeba}.\\

Looking at the shape of $-\Omega$ presented in Figure \ref{fig:_Omega}, in the case of $L_{F_2}$, we see that for each $\rr \in M_2$ with positive entries there is a tangent line to the boundary $\partial(-\Omega)$, is orthogonal to $\rr$ and hence, the infimum of the linear form \eqref{eqn:psi_as_inf_on_boundary} is achieved. 
\begin{figure}[!htb]
\begin{subfigure}[b]{0.5\textwidth}
\centering
    \includegraphics[width=\textwidth]{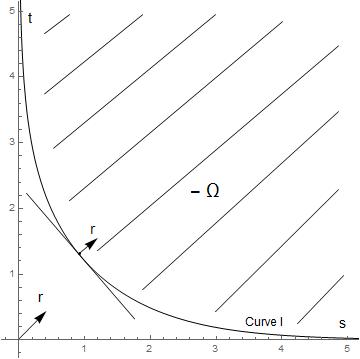}
    \caption{}
   \label{subfig:_Omega_F2}
\end{subfigure}
\hfill
\begin{subfigure}[b]{0.35\textwidth}
\centering
    \includegraphics[width=\textwidth]{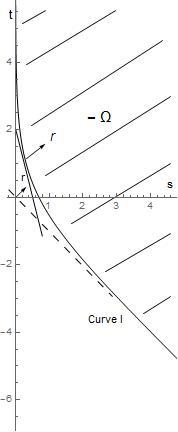}
    \caption{}
    \label{subfig:_Omega_Fib}
\end{subfigure}
\caption{The sets $-\Omega$ presented in \ref{subfig:_Omega_F2} and \ref{subfig:_Omega_Fib} are associated with languages $L_{F_2}$ and $L_{Fib}$, respectively.}
\label{fig:_Omega}
\end{figure}
To compute $\psi(\rr)$ we apply a standard method of Lagrange multipliers. \\

In coordinates $(s,t) \in \R^2,$ $z_1 = e^{-s}, z_2 = e^{-t}$, the boundary of $-\Omega$ is a curve $l$ given by the equation 
\begin{equation}
    1- e^{-s} - e^{-t} - 3e^{-s-t} = 0 \label{eqn:F2_in_minus_s_t}
\end{equation}
or
\begin{equation}
    e^{s+t}- e^{s} - e^{t} - 3= 0 \label{eqn:F2_in_plus_s_t} 
\end{equation}
Let $\rr = (p,q) \in M_2.$ We have to minimize $ps+qt $ when $(s,t) \in l.$ The associated Lagrange function is
    $$ \Phi(s,t,\lambda) = ps+qt - \lambda(e^{s+t}- e^{s} - e^{t} - 3).$$
    Equating partial derivatives to zero we obtain
    \begin{eqnarray}
    \displaystyle\frac{\partial\Phi}{\partial s} &=&  p - \lambda \left( e^{s+t} - e^s\right) = 0 \implies p = \lambda\left( e^{s+t} - e^s\right) \nonumber\\
    \displaystyle\frac{\partial\Phi}{\partial t} &=&  q - \lambda \left( e^{s+t} - e^t\right) = 0\implies q = \lambda \left( e^{s+t} - e^t\right) \nonumber\\
    \displaystyle\frac{\partial\Phi}{\partial \lambda} &=& e^{s+t}- e^{s} - e^{t} - 3 = 0. \nonumber
    \end{eqnarray}
This gives $$\rho = \displaystyle \frac{p}{q} = \frac{1-e^{-t}}{1-e^{-s}} \implies e^{-t} = 1-(1-e^{-s})\rho$$ 
Substituting the value of $e^{-t}$ in \eqref{eqn:F2_in_minus_s_t} we get
\begin{eqnarray}
1-e^{-s}-\left(1-(1-e^{-s})\rho\right)-3e^{-s}\left(1-(1-e^{-s})\rho\right) &=& 0 \nonumber\\
-e^{-s}+\rho-\rho e^{-s}-3e^{-s}+3\rho e^{-s}-3\rho e^{-2s} &=& 0 \nonumber\\
-3\rho e^{-2s} +\left(2\rho-4\right)e^{-s}+\rho &=& 0 \nonumber 
\end{eqnarray}
Substituting $x = e^{-s}$ in the above quadratic and then solving it gives
$$x_{1,2} = \displaystyle \frac{2\rho \pm \sqrt{1-\rho+\rho^2}}{6\rho}$$
We choose positive sign (i.e. +) because we know due to \cite{GQ23} that the function is real analytic. Re-substituting  $x = e^{-s}$ and $\rho = \displaystyle \frac{p}{q}$ we get
$$e^{-s} = \displaystyle \frac{p-2q+2\sqrt{p^2-pq+q^2}}{3p},$$
$$e^s =  \displaystyle \frac{2q-p+2\sqrt{p^2-pq+q^2}}{p}$$
and by symmetry, 
$$e^t =  \displaystyle \frac{2p-q+2\sqrt{p^2-pq+q^2}}{q}.$$
Hence,
$$s = \displaystyle\log\left(\frac{2q-p+2\sqrt{p^2-pq+q^2}}{p}\right),~~ t = \displaystyle\log\left(\frac{2p-q+2\sqrt{p^2-pq+q^2}}{q}\right),$$
and
$$\psi_{F_2}(\rr) = p \displaystyle\log\left(\frac{2q-p+2\sqrt{p^2-pq+q^2}}{p}\right)+ q\displaystyle\log\left(\frac{2p-q+2\sqrt{p^2-pq+q^2}}{q}\right)$$
or
$$\psi_{F_2}(\rr) = \mathbf{H}(\rr) + p \displaystyle\log\left(2q-p+2\sqrt{p^2-pq+q^2}\right)+ q\displaystyle\log\left(2p-q+2\sqrt{p^2-pq+q^2}\right),$$
where $\mathbf{H}(\rr) = -p\log p - q \log q.$ 
See Figure \ref{subfig:ind_F2} for the graph of $\psi_{F_2}(\rr) = \psi_{F_2}(p,1-p).$

\section{Multivariate growth in the case of Fibonacci language } \label{sec:indicator_fib}

Fibonacci language $L_{Fib}$ is a language over binary alphabet $\{0,1\}$ consisting of words that do not contain $11$ as a subword i.e. the word $11$ is forbidden. It is one of the important examples associated with subshifts of finite type \cite{lind1995introduction}. The reason why Fibonacci name is associated to it is coming from the fact that the number of words of length $n$ in $L_{Fib}$ is equal to the $(n+2)th$ Fibonacci number $\beta.$
$$\beta = \displaystyle \frac{1}{\sqrt{5}} \left(\lambda_1^{n+2}-\lambda_2^{n+2}\right),$$ where $\lambda_1 = \displaystyle\frac{1+\sqrt{5}}{2}$ and $\lambda_2 = \displaystyle\frac{1-\sqrt{5}}{2}$ are eigenvalues of matrix 
$$A = \left(\begin{array}{cc}
  1 & 1 \\
  1 & 0 \\
\end{array}\right)$$
which is the matrix of transitions of the Fibonacci subshift. At the same time $A$ is the adjacency matrix of the automaton $\A_{Fib}$ given in the Figure \ref{fig:fibonacci} 
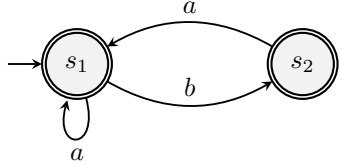
\begin{figure}[!htb]
\centering
    \begin{tikzpicture}
        \node[state, initial, accepting] (q1) {$s_1$};
        \node[state, accepting, right of=q1] (q2) {$s_2$};
       
        \draw[->]   (q1) edge[bend right, above] node{$b$} (q2)
                (q1) edge[loop below] node{$a$} (q1)
                (q2) edge[bend right, above] node{$a$} (q1);
    \end{tikzpicture}
    \caption{Moore diagram of Fibonacci automaton $\A_{Fib}$}
    \label{fig:fibonacci}
\end{figure}
which accepts the language $L_{Fib}$, if instead of $\{0,1\}$ we use the alphabet $\{a,b\}.$ An easy computation of growth series gives function 
$$\Gamma_{Fib}(\z) = \displaystyle\frac{1+z_2}{1-z_1-z_1z_2}$$ mentioned before Definition \ref{def:condition_E}. In order to compute $\psi$, observe from Figure \ref{fig:fibonacci} that the Condition (CG) holds as the automaton $\A_{Fib}$ is ergodic. 

\begin{Pro}\label{pro:fib_psi}
The indicatrice $\psi_{Fib}$ viewed as a function of $p, 0 < p < 1$ where $\rr = (p,1-p) \in M_2$ is the direction vector, is given by

\begin{eqnarray}
      \psi_{Fib}(\rr) = \left\{ 
      \begin{array}{lr}
        p \log \displaystyle \left(\frac{p}{2p-1}\right) + (1-p)\log \displaystyle\left( \frac{2p-1}{1-p}\right)   & \textnormal{ if } p \geq \displaystyle\frac{1}{2} \label{eqn:psi_fib} \\
         -\infty   & \textnormal{ if } p < \displaystyle\frac{1}{2} \nonumber
      \end{array} \right.
\end{eqnarray}
\end{Pro}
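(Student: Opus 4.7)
The plan is to apply Corollary \ref{cor:psi_as_support} to the rational generating function $\Gamma_{Fib}(\z)=(1+z_2)/(1-z_1-z_1z_2)$. Because $\A_{Fib}$ is ergodic, the language $L_{Fib}$ satisfies condition (E) and hence $\Gamma_{Fib}$ satisfies condition (CG), so Theorem \ref{the:psi_as_support} (and its Corollary) is available. Since $\Gamma_{Fib}$ has non-negative coefficients and the only singularities come from $1-z_1-z_1z_2=0$, the real-positive slice of $\D$ is $\{(z_1,z_2)\in\mathbb R_{>0}^2:z_1(1+z_2)<1\}$, so the formula \eqref{eqn:psi_as_inf_on_dabaD} reduces to minimizing $-p\log z_1-(1-p)\log z_2$ over the boundary curve $z_1+z_1z_2=1$ with $z_1,z_2>0$.

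Switching to the log coordinates $s=-\log z_1$, $t=-\log z_2$ (mirroring what was done for $F_2$ in Section \ref{sec:indicator_F_m}), the task becomes minimizing $ps+(1-p)t$ subject to $1-e^{-s}-e^{-s-t}=0$. I would set up the Lagrangian
\[
\Phi(s,t,\lambda)=ps+(1-p)t-\lambda\bigl(e^{s+t}-e^{t}-1\bigr),
\]
compute the three partial derivatives, and eliminate $\lambda$ from the first two equations. Writing $u=e^s$, $v=e^t$, the constraint becomes $uv-v-1=0$, i.e.\ $v=1/(u-1)$, and the critical-point condition reduces to $p/(1-p)=u/(u-1)$, whence $u=p/(2p-1)$ and $v=(2p-1)/(1-p)$. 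These are positive precisely when $p>\tfrac12$; substituting yields
\[
\psi_{Fib}(\rr)=p\log\frac{p}{2p-1}+(1-p)\log\frac{2p-1}{1-p},
\]
and the boundary case $p=\tfrac12$ is obtained by taking a one-sided limit, the apparent $\infty-\infty$ cancelling against $(1-2p)\log(2p-1)\to 0$ to give $\psi_{Fib}(\tfrac12,\tfrac12)=0$.

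The genuinely delicate part is the case $p<\tfrac12$, which I would treat as the main obstacle. The Lagrange system has no admissible solution there, so one must show directly that the infimum in \eqref{eqn:psi_as_inf_on_dabaD} is $-\infty$. The shape of $-\Omega$ (Figure \ref{subfig:_Omega_Fib}) shows that the boundary curve $e^{s+t}-e^{t}-1=0$ admits the asymptote $s+t\to 0^{+}$ as $s\to\infty$, $t\to-\infty$. Along this asymptote,
\[
ps+(1-p)t=(2p-1)s+o(1),
\]
and for $p<\tfrac12$ the factor $2p-1<0$ drives the expression to $-\infty$. As a combinatorial sanity check, I would also remark that any $w\in L_{Fib}$ satisfies $|w|_b\le|w|_a+1$ (between any two $b$'s there must be at least one $a$), so the density of $b$'s cannot exceed $\tfrac12$ asymptotically; consequently no sequence of words in $L_{Fib}$ has frequency vector converging to $(p,1-p)$ with $p<\tfrac12$, giving $\psi_{Fib}(\rr)=-\infty$ in complete agreement with the support-function computation.
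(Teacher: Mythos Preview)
Your proposal is correct and follows essentially the same route as the paper: the paper also applies the support-function formula, sets up the identical Lagrangian $\Phi(s,t,\lambda)=ps+qt-\lambda(e^{s+t}-e^t-1)$, and solves to obtain $e^s=p/(2p-1)$, $e^t=(2p-1)/(1-p)$. For $p<\tfrac12$ the paper likewise gives a geometric argument (the level lines $ps+(1-p)t=c$ meet $-\Omega$ for every $c$, which is exactly your asymptote observation) together with an algebraic one via the explicit power-series expansion of $(1+y)/(1-x-xy)$, amounting to your inequality $|w|_b\le|w|_a+1$.
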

The graph of $\psi_{Fib}(\rr)$ on $[\frac{1}{2},\infty)$ is shown in the Figure \ref{subfig:ind_Fib}.
\begin{proof} As before, we switch to the variables $x,y$ instead of $z_1,z_2,$ respectively. The amoeba of $f_2(x,y) = 1-x-xy$ and the set $-\Omega$ are shown in the Figures \ref{subfig:Amoeba_Fib} and \ref{subfig:_Omega_Fib}, respectively.\\

We provide two explanations why $\psi_{Fib}(p) = -\infty$ when $0 < p < \displaystyle\frac{1}{2},$ first, algebraic and second, geometric. The power series expansion of $(1-x-xy)^{-1}$ is
\begin{eqnarray}
\displaystyle\frac{1}{1-x-xy} &=& \displaystyle\sum_{n=1}^{\infty}\sum_{i=0}^n \binom ni x^{n-i} (xy)^{i} \nonumber\\
&=& \displaystyle\sum_{n=1}^{\infty}x^n\sum_{i=0}^n \binom ni y^{i} \nonumber
\end{eqnarray}
Hence,
\begin{eqnarray}
\displaystyle\frac{1+y}{1-x-xy} &=& \displaystyle\sum_{n=1}^{\infty}x^n\sum_{i=0}^n \binom ni y^{i} + \displaystyle\sum_{n=1}^{\infty}x^n\sum_{i=0}^n \binom ni y^{i+1} \nonumber\\
&=& \displaystyle\sum_{n=1}^{\infty}x^n\left\{1+y+\sum_{i=0}^n \left[ \binom ni + \binom{n}{i-1} \right] y^{i} \right\} \nonumber
\end{eqnarray}
and we see that coefficients of the power series corresponding to the indices $(n,i)$ with $i > n+1$ are zero. Hence, any direction $\rr = (p,q)$ with $\frac{p}{q} > 1$ gives value $-\infty$ to $\psi_{Fib}.$ As for the geometric explanation let us use Figures \ref{subfig:Amoeba_Fib} and \ref{subfig:_Omega_Fib}. The domain $-\Omega$ is shown in Figure \ref{subfig:_Omega_Fib} and it is bounded by the curve $l$ given by the equation 
\begin{equation}
    1-e^{-s}-e^{-s-t} = 0 \label{eqn:Fib_denom_in_e}
\end{equation}
\begin{itemize}
    \item Case (i) If $p < \frac{1}{2},$ then $\frac{1-p}{p} > \frac{1}{2}$ and lines $$ps+(1-p)t =c$$ intersect $-\Omega$ for arbitrary $c \in \R.$ Hence,
    $$\psi_{Fib}(p) = \displaystyle \inf_{(s,t) \in -\Omega} \left(ps + (1-p)t\right) = -\infty$$
    \item Case (ii) If $p > \frac{1}{2},$ then $\frac{1-p}{p} < \frac{1}{2}$ and there is a unique value of $c$
    such that the line $$ps+(1-p)t =c$$ is tangent to the curve $l.$ The coordinates $(s_0,t_0)$ of the tangent point $P$ gives a minimum value to the linear form $ps+(1-p)t$ when $(s,t) \in -\Omega.$ To find it again, we again apply the method of Lagrange. We denote $q = 1-p$ and rewrite Equation \eqref{eqn:Fib_denom_in_e} as 
    $$e^{s+t}-e^t-1=0$$
    The associated Lagrange function is
    $$ \Phi(s,t,\lambda) = ps+qt - \lambda(e^{s+t}-e^t-1)$$
    and the corresponding system of equation is
    \begin{eqnarray}
    \displaystyle\frac{\partial\Phi}{\partial s} &=&  p - \lambda e^{s+t} = 0 \nonumber\\
    \displaystyle\frac{\partial\Phi}{\partial t} &=&  q - \lambda e^{s+t} + \lambda e^t = 0 \label{eqn:fib_system}\\
    \displaystyle\frac{\partial\Phi}{\partial \lambda} &=& e^{s+t}-e^t-1 = 0 \nonumber
    \end{eqnarray}
    From the first equation in \eqref{eqn:fib_system}, we get $e^{s+t} = \frac{p}{\lambda}.$ from the second equation we get
    $q = p + \lambda e^t,$ and from the third equation we get $\frac{p}{\lambda} - e^t -1 =0.$ Hence, 
    $$e^t = \displaystyle \frac{p-q}{\lambda} = \frac{2p-1}{\lambda} $$
    But $\lambda = 1-p = q$. This gives,
    $$e^t = \displaystyle \frac{2p-1}{1-p} \textnormal{ and hence, } e^s = \frac{p}{2p-1}.$$
    The last equation determine the point $(s_0,t_0).$ Making substitution in \eqref{eqn:psi_as_inf_on_boundary} we obtain \eqref{eqn:psi_fib}.
\end{itemize}
\end{proof}

\section{Multivariate growth and LDT}\label{sec:LDT}
The alternative approach for computing $\psi(\rr)$ is via the application of methods of Large Deviation Theory (LDT). Here we discuss a special case related to languages associated with subshifts of finite type. Let us first recall the basic facts about subshifts of finite type (SFT). For more details see \cite{lind1995introduction}. Let $\Sigma = \{a_1,\cdots,a_d\}$ be a finite alphabet and $\Sigma^{\Z}$ be a space of two-sided infinite sequences over $\Sigma$ indexed by integers. $\Sigma^{\Z}$ is supplied by a product topology  and is homeomorphic to a Cantor set when $d \geq 2.$ The shift map $U : \Sigma^{\Z} \rightarrow \Sigma^{\Z}$ is the homeomorphism given by $$(Uw)_n = w_{n+1}, ~w = (w)_{n=-\infty}^{\infty} \in \Sigma^{\Z}.$$
A closed $U$-invariant subset $X \subset \Sigma^{\Z}$ is a subshift. Let $L_X\subset \Sigma^*$ be a language of subshift consisting of (finite) words that appear as a subwords of $w \in X.$ A subshift $X$ is said to be subshift of finite type if there is a finite subset ${\bf F} \subset \Sigma^*$ (set of forbidden words) such that 
$X = X_{\bf F}$ consist of sequences $w \in \Sigma^{\Z}$ that do not contain forbidden subwords. It is obvious that $X_{\bf F}$ is closed and $U$-invariant. For instance, in the case $\Sigma = \{0,1\}$ and ${\bf F} = \{11\}$ we get the Fibonacci subshift. Alternative way to define subshifts of finite type is via the graph $\Xi =(V,E)$ (in fact directed multi-graph i.e. loops and multiple edges are allowed) or equivalently, via the adjacency matrices $A = (a_{ij})$ of the size $|V|\times|V|$ whose rows and columns correspond to vertices of the graph and 
$a_{ij}, i,j \in V$ is a non-negative integer equal to the number of edges joining vertex $i$ with vertex $j.$ For instance, the Fibonacci subshift the matrix is 
$$ A = \left(\begin{array}{cc} 1 & 1 \\ 1 & 0 \\\end{array}\right)$$  
and the graph $\Xi$ is given by the Figure \ref{fig:gamma_fibonacci} and is similar to the diagram of the automaton from Figure \ref{fig:fibonacci}. 
\begin{figure}[!htb]
\centering
    \begin{tikzpicture}
        \node[state, ] (q1) {};
        \node[state,  right of=q1] (q2) {};
       
        \draw[->]   (q1) edge[bend right, above] node{$1$} (q2)
                (q1) edge[loop below] node{$0$} (q1)
                (q2) edge[bend right, above] node{$0$} (q1);
    \end{tikzpicture}
    \caption{The graph $\Xi$ of the Fibonacci subshift}
    \label{fig:gamma_fibonacci}
\end{figure}
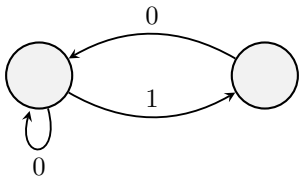

Another example is the free group $F_m$, the subshift $X$, $m \geq 2$ with alphabet $\Sigma = \{a_1,\cdots,a_m,a_1^{-1},\cdots,a_m^{-1}\}$ and 
$$A = \left(
\begin{array}{cc}
 J_{m\times m} & K_{m\times m} \\
 K_{m\times m} & J_{m\times m}
\end{array}\right),$$
where $J$ is matrix with all entries $1$, $K = J-I$ and $I$ is an $m\times m$ identity matrix. The language $L_X$ consists of freely reduced words.\\

It is well known that the language $L_X$ associated with a subshift of finite type is regular. Hence, its multivariate growth series represents a  rational function and the technique of computation of multivariate growth rate described in the previous sections is applicable. Now, we shall see how the results of LDT can be used for the same goal.\\

To make one more step towards LDT, let us recall some other important notions related to SFT. The SFT $(U,X)$ is \emph{irreducible} if the graph $\Xi_X$ is strongly connected (i.e. for any vertices in graph, there is a path connecting them). In this case, the associated matrix $A$ is called irreducible. The Perron-Frobenius theorem states that the irreducible matrix $A$ with non-negative entries (like in our case) has a simple eigenvalue $\rho = \rho(A)$ (called Perron-Frobenius eigenvalue) such that any other eigenvalue $\lambda$ satisfies $|\lambda| < \rho.$ Also there are two vectors $\uu, \vv \in \R^d$ satisfying $$A\vv = \rho\vv, \uu^tA = \rho\uu^t,$$ where $\uu^t$ is the transpose of the column vector $\uu.$ These vectors are unique up to scalar factor. Assume that $A$ is primitive matrix. Then the Perron-Frobenius triple $(\rho,\uu,\vv)$ (consisting of vectors $\uu,\vv > 0$ such that $A\vv = \rho\vv, \uu^tA = \rho\uu^t,$ and normalized by the condition $\langle \uu^t , \vv \rangle = 1$) gives the information about the powers $A^n$ of $A$:
$$\displaystyle\lim_{n\rightarrow\infty} \frac{1}{\rho^n}A^n = \vv\cdot\uu^t,$$ where $\uu^t$ is the transpose of $\uu.$ See (\cite{lind1995introduction}, Theorem 4.5.12).\\

Next, we need the notion of Markov measure on $\Sigma^{\Z}.$ Given a a stochastic $d\times d$ matrix $$P = (p_{ij}), \textrm{ where } p_{ij} \geq 0 \textrm{ and } \displaystyle\sum_{j=1}^dp_{ij}=1, i = 1,\cdots,d, $$
and a stationary probability row vector $p = (p_1,\cdots,p_d)$, $pP = p,$ one can define a Borel probability measure $\mu = \mu_P$ on $\Sigma^{\Z}$ by 
$$\mu\left([\omega_0,\cdots,\omega_n]\right) = p_{\omega_0}\displaystyle\prod_{i=0}^{n-1}p_{\omega_i,\omega_{i+1}},$$ where $[\omega_0,\cdots,\omega_n]$ is a cylinder subset of $\Sigma^{\Z}$ consisting of all $\omega \in \Sigma^{\Z}$ with the prescribed entries $\omega_0,\cdots,\omega_n$ at coordinates $0,1,\cdots,n.$ (such sets generate the sigma-algebra of Borel subsets and hence, values of $\mu$ on them determine $\mu$ completely). The Perron-Frobenius eigenvalue of $P$ is $1,$ vector $p$ exists and is unique if $P$ is irreducible. The measure $\mu_P$ is shift-invariant and the system $\left(\Sigma^{\Z},U,\mu\right)$ is ergodic.\\

One of the main results of theory of SFT is a statement (assuming irreducibility of subshift $X$) on the existence and uniqueness of probability measure $\eta = \eta_X$ of maximal entropy. Not getting into the details we just mentioned that if $\vartheta$ is $U$-invariant probability measures on $\Sigma^{\Z},$ then the Kolmogorov-Sinai entropy $h(\vartheta)$ can be defined. Then 
$$h(\eta) = \displaystyle\max_{\vartheta} h(\vartheta),$$
where maximum is taken over all $U$-invariant probability measures supported on $X.$ The measure $\eta$ is called \emph{Parry} measure. It is a Markov type measure determined by a stochastic matrix $P = (p_{ij}),$ with 
\begin{equation}
    p_{ij} = \displaystyle\frac{1}{\rho} a_{ij}\frac{v_j}{v_i}, \label{eqn:perry}
\end{equation}
where $A = (a_{ij})$ and $A\vv = \rho\vv.$ For such measure we have 
\begin{equation}
    \mu\left([i,x_1,\cdots,x_{n-1},j]\right) = \displaystyle\frac{u_iv_j}{\rho^n} \label{eqn:mu_cylinder1}
\end{equation}
See \cite{parryMR161372_1964} for further details. In fact, for $\eqref{eqn:perry}$ and \eqref{eqn:mu_cylinder1} we have to assume that $a_{ij} \in \{0,1\}$ (i.e. graph $\Gamma$ does not have multiple edges). Meanwhile observe that, every SFT can be coded in such a way that the matrix $A$ will have only entries $\{0,1\}$ \cite{lind1995introduction}. Starting from this moment we assume that the measure of maximal entropy is associated with SFT. \\

Now, we recall some basic notions of LDT, namely, the notion of the rate function $I$ and the Large Deviation Principal (LDP). A rate function is a lower semi-continuous function $I: W \rightarrow [0,+\infty]$ defined on a topological space $W$ (for us now $W = \R^d$) such that for each $a \in [0,\infty)$ the level set $$Y_I(a) = \{\x \in W : I(\x) \leq a\}$$ is a closed subset of $W.$ A \emph{good rate function} is a rate function for which all level sets $Y_I(a)$ are compact. A sequence $\{\mu_n\}_{n=1}^{\infty}$ of Borel measures on $W$ satisfies LDP if, for every Borel subset $B \subset W,$
$$- \displaystyle\inf_{\x \in B^{\circ}} I(x) \leq \displaystyle\liminf_{n \rightarrow \infty} \frac{1}{n} \log \mu_n(B) \leq \limsup_{n \rightarrow \infty} \frac{1}{n} \log \mu_n(B) \leq - \inf_{\x \in \overline{B}} I(\x),$$ where $B^{\circ}, \overline{B}$ are,  respectively, the interior and the closure set of the set $B$. \\

Given a finite Markov chain on a set $\Sigma = \{a_1,\cdots,a_d\}$ determined by a stochastic matrix $P = (p_{ij})_{i,j}^d$ and a function $f : \Sigma \rightarrow \R^d, d \geq 1$ one can consider for each $x = (x_i)_{i=1}^{\infty} \in \Sigma^{\N}$ the empirical means
$$Z_n(x) = \displaystyle\frac{1}{n}\sum_{i=1}^nf\left(x_i\right)$$ and the corresponding distributions $\mu_n$ (discrete measures in $\R^d$). We assume that the random process is a Markov process given by the matrix $P$ and stationary vector $p$, $pP = p.$\\

Associate with every $\y \in \R^d$ a non-negative matrix $\Pi(\y)$ whose elements are 
$$ \pi_{ij}(\y) = p_{ij} e^{\langle \y,f(j) \rangle}, i,j \in \Sigma.$$
$\Pi(\y)$ is a matrix with non-negative entries and it is irreducible if and only if $P$ is irreducible. Let $\rho(\Pi(\y))$ be the Perron-Frobenious eigenvalue of $\Pi(\y).$
The Theorem 3.1.2 of \cite{zeitouni1998large} states that the empirical mean $Z_n$ (or corresponding distributions $\mu_n$) satisfies the LDP with the convex and good rate function 
$$I(\z) = \displaystyle \sup_{\y \in \R^d}  \left\{ \langle \y,\z \rangle - \log\rho\left(\Pi(\y)\right)\right\}, \z \in \R^d.$$
There is a version of this result due to Sanov which is more suitable for our goals. Let $f: \Sigma \rightarrow \R^d, |\Sigma| = d$ be a function such that
$$ f(a_i) = \left(
\begin{array}{c}
 0 \\
\vdots\\
1\\
\vdots\\
0
\end{array}\right)$$
where $1$ is at $i$-th position and $i = 1,\cdots,d.$\\

The following alternative description of $I$ holds (\cite{zeitouni1998large}, Theorem 3.1.6).
\begin{eqnarray}
     I(\rr) = \left\{ 
     \begin{array}{lr}
        \displaystyle \sup_{\uu \gg \oo} \sum_{j=1}^d r_j \log \left(\frac{u_j}{(\uu P)_j}\right)  &  \rr \in M_d \label{eqn:sanov}\\
         \infty & \rr \notin M_d \nonumber
     \end{array}\right.
\end{eqnarray}
The supremum is taken over the strictly positive vectors $\uu$ i.e. $u_i > 0$ for all $i$.\\

Now we have all needed to describe the connection between $\psi$ and $I.$ We assume that the language $L\subset\Sigma^*$ is a language determined by the automaton $\A$ with the property that for each state $q \in Q$ all incoming edges are labeled by the same symbol (like in the examples given by the Figure \ref{fig:ergodic_and_nonergodic} or \ref{fig:fibonacci}). The indicatrice of growth as before is denoted by $\psi(\rr).$ We assume that $\A$ is ergodic (and hence, condition (CG) satisfied). Let $A$ be the adjacency matrix of $\A$ and $X_A$ be the corresponding subshift. Let $L_1$ be a language associated with $X_A$. In described situation we identify $\Sigma$ with $Q$ attaching to each state $q \in Q$ symbol $a_q$ (the label of the entering edges). Because $\A$ is ergodic, we get a bijection between $\Sigma$ and $Q.$ After such identification, it become obvious that  $L \subset L_1.$ Because of ergodicity (i.e. irreducibility of $A$) as easily can be shown the indicatrice of growth $\psi(\rr)$ is same for languages $L$ and $L_1.$ \\

Associate with $A = (a_{ij})$ the stochastic matrix $P = (p_{ij}),$ where \\$ p_{ij} = \displaystyle \frac{a_{ij}~ v_j}{\rho~v_i},$ $\rho=\rho(A)$ and $A\vv = \rho\vv,~\vv \gg 0.$ Let $p$ be a stationary probability vector $(pP=p)$ and $\mu = \mu_p$ be a corresponding Markov measure (i.e. Perry measure). From \eqref{eqn:sanov} 
(assuming normalization $\langle p,\vv\rangle = 1$) we know that the measure $\mu$ is almost equidistributed on the cylinder sets $C_w$ determined by the words $w \in L_1$ of the fixed length as for any $i,j, w_1,\cdots,w_{n-1} \in \Sigma$
\begin{equation}
    \mu\left([i, w_1,\cdots,w_{n-1},j]\right) = \displaystyle \frac{p_i~ v_j}{\rho^n}. \label{eqn:mu_cylinder2}
\end{equation}
Let $\rr \in M_d$ be a  rational vector with positive entries and $\mathcal{C} \subset M_d$ a small neighborhood of $\rr.$ Let 
$$B_n = \{w \in \Delta_{\A} : Z_n(w) \in \mathcal{C}\}$$
From LDP, we know that $\frac{1}{n}\log\mu(B_n)$ is close to $-I(\rr)$ when $n$ is large. On the other hand, from \eqref{eqn:mu_cylinder2} we get that 
$\frac{1}{n}\log\mu(B_n)$ is close to $\frac{1}{n}\log\left(\rho^{-n}\cdot l_{n\rr}\right),$ where
$$L_1(\z) = \displaystyle\sum_{\ii \in \N^d} l_{\ii}\z^{\ii}$$ is a multivariate growth series of $L_1.$ Hence, in the limit when $n\rightarrow \infty$ we get the equality
\begin{eqnarray}
     I(\rr) &=& \log\rho - \displaystyle \limsup_{n\rightarrow\infty} \frac{1}{n} \log l_{n\rr} \nonumber\\
     &=& \log\rho - \psi(\rr) \label{eqn:I_psi_L1}
\end{eqnarray}
In fact, under the impose conditions using the definition of $\psi(\rr)$ one can prove \eqref{eqn:I_psi_L1} for all $\rr \in M_d,$ not just rationals. We summarize this as 
\begin{Pro}\label{pro:I_psi_L}
The indicatrice of growth $\psi(\rr)$ of a language determined by ergodic automaton $\A$ of type described above satisfies 
$$I(\rr) = \log\rho - \psi(\rr),$$ 
where $I(\rr)$ is the rate function associated with the Markov chain determined by the stochastic matrix $P$ corresponding to $A$ and  
\begin{equation}
    I(\rr) = \displaystyle \sup_{\uu \gg \oo} \sum_{j=1}^d r_j \log \left[\frac{u_j}{(\uu P)_j}\right],  ~~~\rr \in M_d. \label{eqn:30*}
\end{equation}
\end{Pro}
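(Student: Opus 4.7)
The plan is to establish the identity by running Sanov's LDP against the Parry measure and matching its asymptotics with the counting asymptotics encoded in $\psi(\rr)$. As the excerpt already observes, one may replace $L$ with the SFT language $L_1$ of $X_A$: ergodicity of $\A$ together with the labeling hypothesis (all incoming edges to a state $q$ carry the same symbol $a_q$) produces a bijection $\Sigma\leftrightarrow Q$ under which $L\subset L_1$, and irreducibility of $A$ forces the two languages to have the same growth indicatrice (any word of $L_1$ can be prepended/appended by short prefixes/suffixes to land in $L$, which does not change $\psi$). So I may work with $L_1$ and its coefficients $l_{\ii}$.

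Next I would set up the asymptotic comparison. Fix a rational $\rr\in M_d$ with strictly positive entries and a small open neighborhood $\mathcal{C}\subset M_d$ of $\rr$, and let $B_n=\{w\in\Sigma^n: Z_n(w)\in\mathcal{C}\}$. Because the Perry measure $\mu$ assigns to every cylinder $[i,w_1,\dots,w_{n-1},j]$ corresponding to an allowed word of length $n+1$ the mass $p_i v_j/\rho^n$ by \eqref{eqn:mu_cylinder2}, and the factors $p_i,v_j$ are bounded above and below by strictly positive constants independent of $n$, we obtain
\begin{equation}
\mu(B_n)\;\asymp\;\rho^{-n}\,\#\{w\in L_1: Z_n(w)\in\mathcal{C},\,|w|=n\},\nonumber
\end{equation}
up to multiplicative constants depending on $p,\vv$ but not $n$. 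Taking $\frac{1}{n}\log$ and letting $n\to\infty$, the right-hand side exponential rate is $-\log\rho+\psi_{\mathcal{C}}(\rr)$, where $\psi_{\mathcal C}(\rr)$ denotes the exponential growth rate of the number of words in $L_1$ of length $n$ with empirical frequency in $\mathcal{C}$; shrinking $\mathcal{C}$ to $\{\rr\}$, this rate converges to $\psi(\rr)/\|\rr\|=\psi(\rr)$ by Definition \ref{def:psi_def}.

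Sanov's theorem, in the form \eqref{eqn:sanov} recalled in the excerpt, gives on the other side
\begin{equation}
-\inf_{\s\in\mathcal{C}^{\circ}} I(\s)\;\le\;\liminf_{n\to\infty}\tfrac1n\log\mu(B_n)\;\le\;\limsup_{n\to\infty}\tfrac1n\log\mu(B_n)\;\le\;-\inf_{\s\in\overline{\mathcal{C}}}I(\s).\nonumber
\end{equation}
Since $I$ is convex and lower semi-continuous, shrinking $\mathcal{C}$ down to $\rr$ squeezes both ends to $-I(\rr)$. Combining with the previous paragraph yields $\psi(\rr)-\log\rho=-I(\rr)$, i.e.\ the claimed identity, for rational $\rr$ with positive entries.

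Finally I would extend to arbitrary $\rr\in M_d$. The function $\psi$ is upper semi-continuous, positively homogeneous and concave under (CG) (recalled in Section \ref{sec:cond_Q}), while $I$ in the Sanov formula is convex and lower semi-continuous; both are continuous on the relative interior of their finite domains. Density of positive-entry rationals in $M_d$ combined with these regularity properties promotes the identity from the rational to the full case; on the boundary where one of the coordinates vanishes, both sides extend by semi-continuity, and the points outside $M_d$ contribute $\psi=-\infty$ and $I=+\infty$ by convention. The main obstacle I anticipate is the uniformity of the constants in the cylinder estimate and, more subtly, justifying that the Sanov rate associated with the empirical distribution of letters coincides with the one governing words in $L_1$; here the single-label-per-state hypothesis is crucial, since it identifies the sequence of visited states with the sequence of emitted letters and thereby turns word-counting in $L_1$ into trajectory-counting for the Markov chain $P$.
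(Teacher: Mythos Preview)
Your proposal is correct and follows essentially the same route as the paper: replace $L$ by the SFT language $L_1$, use the Parry-measure cylinder formula \eqref{eqn:mu_cylinder2} to relate $\mu(B_n)$ to $\rho^{-n}$ times a word count, and match this against Sanov's LDP rate, first for rational $\rr$ with positive entries and then extend. Your write-up is in fact more careful than the paper's, which leaves the shrinking of $\mathcal{C}$ and the extension to irrational $\rr$ as remarks; your explicit invocation of the convexity/lower semi-continuity of $I$ and the upper semi-continuity and concavity of $\psi$ to pass to the limit is exactly the kind of justification the paper gestures at but does not spell out.
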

Finally, we make one more remark. Recall that entries of $A$ belong to the set $\{0,1\}.$ Let as before $\vv = (v_1,\cdots,v_d)$ be a right eigenvector of $A$ corresponding to $\rho=\rho(A), A\vv =\rho\vv.$ Assume that $\vv$ is a probability vector. Let $$M_d^* = \displaystyle \left\{\rr = (r_1,\cdots,r_d) \in M_d: r_i > 0, ~\forall~ i \right\}.$$ Then the map $T:M_d^* \rightarrow \R^d$ given by 
$$T(\q) = \s = (s_1,\cdots,s_d),$$ where 
\begin{equation}
    s_j = \displaystyle \frac{q_j}{v_j \sum_{i=1}^d \frac{q_i a_{ij}}{v_i}}, j = 1,\cdots,d. \label{eqn:T_S}
\end{equation}
Let $A(\z)$ be the matrix from Proposition \ref{pro:multigrow} i.e. matrix obtained from $A$ by replacing each $1$ in the $j$th column of $A$ by $z_j$ and let $\tv$ be a vector obtained from $\q \in M_d^*$ by 
$$\tv = \left(\frac{q_1}{v_1},\cdots,\frac{q_d}{v_d}\right)$$
\begin{lem}\label{lem:tAs_t}
For each $\q \in M_d^*$ vector $\tv$ satisfies $\tv A(\s) =\tv,$ where $\s = (s_1,\cdots,s_d)$ is given by \eqref{eqn:T_S}.
\end{lem}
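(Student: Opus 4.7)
The plan is to verify the identity $\tv A(\s) = \tv$ by direct computation, unwinding the definitions of $A(\z)$, $\tv$, and $\s$. This is a straightforward verification; there is no real obstacle, just careful bookkeeping.

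First I would recall the structure of the matrix $A(\z)$ from Proposition \ref{pro:multigrow}. Since entries of $A$ lie in $\{0,1\}$ and the $j$-th column of $A(\z)$ is obtained by replacing each $1$ in column $j$ by $z_j$, the $(i,j)$ entry of $A(\z)$ is simply $a_{ij} z_j$. In particular, substituting $\z = \s$ gives $A(\s)_{ij} = a_{ij} s_j$.

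Next I would compute the $j$-th coordinate of the row vector $\tv A(\s)$. By the previous observation,
\begin{equation*}
\bigl(\tv A(\s)\bigr)_j = \sum_{i=1}^d t_i\, a_{ij}\, s_j = s_j \sum_{i=1}^d t_i\, a_{ij}.
\end{equation*}
Using the definition $t_i = q_i/v_i$, this becomes $s_j \sum_{i=1}^d \frac{q_i a_{ij}}{v_i}$. Now I would substitute the definition of $s_j$ from \eqref{eqn:T_S}, which yields
\begin{equation*}
\bigl(\tv A(\s)\bigr)_j = \frac{q_j}{v_j \sum_{i=1}^d \frac{q_i a_{ij}}{v_i}} \cdot \sum_{i=1}^d \frac{q_i a_{ij}}{v_i} = \frac{q_j}{v_j} = t_j.
\end{equation*}

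Since this holds for every $j = 1,\ldots,d$, we conclude $\tv A(\s) = \tv$, as claimed. The only tacit assumption needed is that the denominator $\sum_i q_i a_{ij}/v_i$ in \eqref{eqn:T_S} does not vanish; this is guaranteed because $\q \in M_d^*$ has strictly positive entries, $\vv \gg \oo$ (being the Perron--Frobenius eigenvector), and each column of the irreducible matrix $A$ contains at least one nonzero (hence equal to $1$) entry.
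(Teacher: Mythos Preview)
Your proof is correct and follows exactly the same direct computation as the paper's own argument: write $(\tv A(\s))_j = s_j\sum_i \frac{q_i a_{ij}}{v_i}$, substitute the definition of $s_j$, and cancel. Your additional remark about the nonvanishing of the denominator is a welcome clarification that the paper leaves implicit.
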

\begin{proof} For $1\leq j \leq d,$
\begin{eqnarray}
\left(\tv A(\s)\right)_j &=& s_j \displaystyle \sum_{i=1}^d \frac{q_i a_{ij}}{v_i} \nonumber\\
   &=& \displaystyle \frac{q_j}{v_j\sum_{i=1}^d \frac{q_i a_{ij}}{v_i}} \cdot \sum_{i=1}^d \frac{q_i a_{ij}}{v_i} \nonumber\\
   &=& \displaystyle \frac{q_j}{v_j} = t_j
\end{eqnarray}
 (in the above relations we used the fact that $a_{ij} \in \{0,1\}$).
\end{proof}
Observe that $\s$ depends on the vector $\q$, so we can write $\s = \s(\q).$   
\begin{cor}\label{cor:detA_S}
$$\det\left(I - A(\s(\q))\right) = 0, ~~\forall~\q \in M_d^*.$$
\end{cor}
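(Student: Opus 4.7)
The plan is to deduce Corollary \ref{cor:detA_S} as a direct consequence of Lemma \ref{lem:tAs_t}, using only the fact that the vector $\tv$ constructed there is nonzero.

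First, I would rewrite the conclusion of Lemma \ref{lem:tAs_t}, namely $\tv A(\s(\q)) = \tv$, in the equivalent form
\[
\tv \bigl( I - A(\s(\q)) \bigr) = \mathbf{0}.
\]
This exhibits $\tv$ as a left null vector of the matrix $I - A(\s(\q))$.

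Next, I would check that $\tv \neq \oo$. By hypothesis $\q = (q_1,\dots,q_d) \in M_d^*$, so every $q_j > 0$. Moreover, $\vv = (v_1,\dots,v_d)$ is the Perron--Frobenius right eigenvector of the irreducible non-negative matrix $A$, and by the Perron--Frobenius theorem it has strictly positive entries $v_j > 0$. Hence every coordinate $t_j = q_j/v_j$ of $\tv$ is strictly positive, and in particular $\tv \neq \oo$.

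Finally, the existence of a nonzero left null vector for $I - A(\s(\q))$ means this square matrix is singular, so
\[
\det \bigl( I - A(\s(\q)) \bigr) = 0,
\]
which is the claim. Since there is no obstacle other than collecting Lemma \ref{lem:tAs_t} and the positivity of $\vv$, the proof is essentially a one-line consequence; the only thing worth remarking explicitly is the positivity of the Perron--Frobenius eigenvector, which guarantees that $\tv$ is a genuine witness for singularity rather than the trivial vector.
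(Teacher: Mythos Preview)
Your proof is correct and is exactly the intended argument: the paper states Corollary \ref{cor:detA_S} without proof because it is an immediate consequence of Lemma \ref{lem:tAs_t}, via the observation that $\tv$ is a nonzero left null vector of $I - A(\s(\q))$. Your explicit justification that $\tv \neq \oo$ (using $q_j > 0$ and the Perron--Frobenius positivity $v_j > 0$) is the only detail worth spelling out, and you have done so.
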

Recall that the multivariate growth series of language $L$ satisfies Proposition \ref{pro:multigrow}. i.e. 
$$\Gamma_L(\z) = \displaystyle \frac{G(\z)}{\det\left(I - A(\z)\right)}$$
and singularities of $\Gamma_L(\z)$ are determined by the roots of denominator. 
Hence, $T(M_d^*)$ is a part of the set of the real singularities of $\Gamma_L(\z)$ and is part of the boundary $\partial\D$.\\

\begin{figure}[!htb]
\begin{subfigure}[b]{0.4\textwidth}
\centering
    \includegraphics[width=\textwidth]{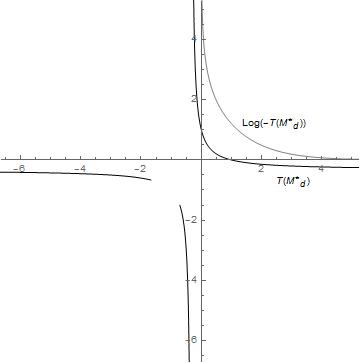}
    \caption{}
   \label{subfig:free T}
\end{subfigure}
\hfill
\begin{subfigure}[b]{0.6\textwidth}
\centering
    \includegraphics[width=\textwidth]{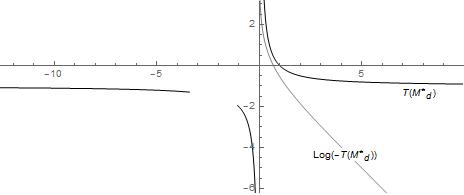}
    \caption{}
    \label{subfig:Fib T}
\end{subfigure}
\caption{The graphics of the sets $T(M_d^*)$ and $\log(-T(M_d^*))$ presented in \eqref{subfig:free T} and \eqref{subfig:Fib T} are associated with languages $L_{F_2}$ and $L_{Fib}$, respectively.}
\label{fig:set_T}
\end{figure}

We already know from Proposition \ref{pro:I_psi_L} that
$$I(\rr) = \log\rho - \psi(\rr).$$
Recall that $P = (p_{ij})$ and $p_{ij} = \displaystyle \frac{v_j a_{ij}}{\rho v_i}$. Rewriting \eqref{eqn:30*} as
\begin{eqnarray}
I(\rr) &=& \displaystyle \sup_{\q \in M_d^* } \left\{\sum_{j=1}^d r_j\log\rho - \displaystyle\sum_{j=1}^d r_j \log \left(
\frac{v_j \sum_{i=1}^d \frac{q_ia_{ij}}{v_i} }{q_j}\right)\right\} \nonumber\\
 &=& \log \rho - \displaystyle \inf_{\q \in M_d^*} \sum_{j=1}^d r_j (-1)\log \left[T(\q)\right]_j\nonumber\\
 &=& \log\rho - \displaystyle \inf_{\s \in T(M_d)} \left(-\sum_{j=1}^d r_j \log s_j\right)\nonumber
\end{eqnarray}
we conclude
$$ \psi(\rr) = \displaystyle \inf_{\s \in T(M_d)}  \left(-\sum_{j=1}^d r_j \log s_j\right)$$
Comparing with \eqref{eqn:psi_as_inf_on_dabaD} we observe that in the described situation, the infimum in \eqref{eqn:psi_as_inf_on_dabaD} can be taken only via the subset of the positive part of the boundary $\partial\D$. See Figures \ref{fig:set_T}.
 
\section{Finer asymptotic}\label{sec:alter}
As was already mentioned in the introduction, in the case of rational vector $\rr$ the function $\psi(\rr)$ can be identified by \eqref{eqn:multigrow_rational} (alternatively, the function $\varphi(\rr) = e^{\psi(\rr)}$ can be identified by \eqref{eqn:varphi_for_rational_r}).
A powerful results of the theory of ACSV (asymptotic combinatorics in several variables) presented in \cite{melczer2020invitation} allow not only to compute in many cases $\varphi(\rr),$ for rational functions $\Gamma(\z) = \displaystyle\frac{G(\z)}{H(\z)}$ and rational vectors $\rr$ but also to describe much finer asymptotics of the
diagonal coefficients $f_{n\rr}$ (See Theorems 5.1,5.2,5.3 in \cite{melczer2020invitation}). Also there are statements on the smoothness of $\varphi(\rr)$ as a function of rational $\rr.$\\

Our definition of $\varphi(\rr),$ that follow the idea from \cite{quint2002divergence} and is based on the cone approach works for arbitrary direction $\rr \in M_d.$ Moreover, under the assumption (CG) in many ``good''examples (including considered in this paper) the upper bound \eqref{eqn:varphi_interms_hr} can be replaced by the equality 
$$ \varphi(\rr) =  e^{ ~\displaystyle \inf_{\x \in \overline{\D}}h_{\rr}(\x)} = e^{~ \displaystyle\inf_{\x \in \partial\overline{\D}}h_{\rr}(\x)} = e^{\psi(\rr)} $$
when $\rr$ is rational, where $$\psi(\rr) = \displaystyle \inf_{\theta \in  \partial\left(-\overline{\Omega}\right)} \langle \rr, \theta\rangle,~~ \Omega = \relog(\D)$$
Additionally, the facts based on the convex analysis and Large Deviation Theory allow to claim that for the good rational functions $\Gamma(\z)$, the function $\varphi(\rr)$ is a real analytic function. See \cite{GQ23}.\\

Now we recall few definitions and results presented in \cite{melczer2020invitation}, apply them to our examples and make a comparison. The Theorem 5.1 from \cite{melczer2020invitation} basically states the following.\\

Let $\rr \in \Q^d$ and let $G(\z), H(\z) \in \Q[\z]$ be coprime polynomials such that $$\Gamma(\z) = \displaystyle \frac{G(\z)}{H(\z)}$$ admits a power series expansion $$\Gamma(\z) = \displaystyle \sum_{\ii \in \N^d} f_{\ii}\z^{\ii}.$$ Suppose that the system of equations
\begin{equation}
    H(\z) = r_2z_1H_{z_1}(\z)-r_1z_2H_{z_2}(\z)= r_dz_1H_{z_1}(\z)-r_1z_dH_{z_d}(\z) \label{sys:mel_5.1}
\end{equation}
admits a finite number of solutions, exactly one of which $\z' \in \Co_*^d$ is minimal (i.e. no other singularity $\z$ of $\Gamma(\z)$ satisfies $|z_j|<|z_j'|$ for all $1\leq j \leq d$).\\

Suppose that $H_{z_d}(\z) \neq 0, G(\z') \neq 0.$ Then as $n \rightarrow \infty$
\begin{equation}
    f_{n\rr} = \z'^{-n\rr}n^{\frac{1-d}{2}}\displaystyle\frac{(2\pi r_d)^{\frac{1-d}{2}} -G(\z')}{z_d'H_{z_d}(\z')\sqrt{\det(\mathcal{H})}}\left(1+O\left(\frac{1}{n}\right)\right) \label{eqn:5.1_mel}
\end{equation}
when $n\rr\in \N^d,$ where $\mathcal{H}$ is a $(d-1)\times(d-1)$ matrix defined by Equation (5.25) in \cite{melczer2020invitation} and it is supposed that 
$\det(\mathcal{H}) \neq 0.$\\

It is also claimed that as $\rr$ varies in any sufficiently neighborhood in $\R^d_{>0},$ the solution $\z' = \z'(\rr)$ varies smoothly with $\rr.$ The factor $(\z')^{-n\rr}$ in \eqref{eqn:5.1_mel} gives us 
$$\displaystyle \limsup_{n\rightarrow\infty} |f_{n\rr}|^{\frac{1}{n}} = |(\z')^{-\rr}| = e^{-\sum r_i\log|z'_i|} = e^{-h_{\rr}(\z')}$$
The system of equations \eqref{sys:mel_5.1} is equivalent in our situation (assuming condition (CG)) to the system of equations coming from the Lagrange multipliers method because if
\begin{equation}
    \Phi(\z,\lambda) = \displaystyle\sum_{i=1}^d r_i\log z_i - \lambda H(\z) \label{eqn:lagrange}
\end{equation}
then the critical points are solutions of the system 
\begin{eqnarray}
\displaystyle\frac{\partial\Phi}{\partial z_i} &=& \displaystyle\frac{r_i}{z_i} - \lambda H_{z_i} = 0 , i = 1,\cdots,d \nonumber\\
H(\z) &=& 0 \label{sys:lagrange}
\end{eqnarray}
which is equivalent to \eqref{sys:mel_5.1}. In our situation, we make substitutions $z_i = e^{-\theta_i}$ and replace \eqref{eqn:lagrange} by the
$$ \Phi'(\theta,\lambda) = \displaystyle\sum_{i=1}^d r_i\theta_i - \lambda H(e^{-\theta}) $$
as shown in the previous sections. Of course \eqref{eqn:5.1_mel} is much finer asymptotic than 
$$ f_{n\rr}  \sim (\z')^{-n\rr}. $$
As was already mentioned, the condition (CG) gives an alternative definition of the growth in the direction of $\rr$ that works not only for rational $\rr$ but for any direction $\rr \in \R_{>0}^d.$ At the same time the above argument show that when (CG) holds and conditions of Theorem 5.1 in \cite{melczer2020invitation} are satisfied, then the rates of growth in the rational direction $\rr$ defined by \eqref{eqn:multigrow_rational} or as $e^{-\psi(\rr)}$ coincide. The smooth dependence of $\z' =\z'(\rr)$ (and hence of $\varphi(\rr) = \z'^{-\rr}$) on $\rr \in \Q_{>0}^d$ can be strengthen to the claim about the analytic dependence on $\rr$ for all $\rr \in \R^d_{>0},$ where $\psi(\rr) \geq 0$ as shown in \cite{GQ23}.\\

An useful tool in discussed topics is the logarithmic gradient map
$$ \displaystyle \triangledown_{\log}f = \left(z_1f_{z_1},\cdots,z_df_{z_d}\right).$$
Proposition 3.13 in \cite{melczer2020invitation} states that for any minimal singular point $\z'$ of $\Gamma(\z) = \displaystyle\frac{G(\z)}{H(\z)}$ (where $G,H$ are coprime) there exists $\rr \in \R^d_{\geq 0}$ and $\tau \in \Co$ such that 
$$ \displaystyle \left(\triangledown_{\log} H^{(\s)}\right)(\z') = \tau\cdot\rr,$$ where $H^{(\s)}$ is a square free part of $H.$ In this situation $\z'$ is either a minimizer or a maximizer of the height function $h_{\rr}(\z)$ on $\overline{\D}.$ \\ 

Let us apply Theorem 5.1 from \cite{melczer2020invitation} to the case of the free group of rank $2.$ We assume that $\rr = (p,1-p) $ is rational, $\z = (x,y)$. Recall that the multivariate growth series of free group $F_2$ is 
$$\Gamma_{F_2}(x,y) 
= \displaystyle\frac{(1+x)(1+y)}{1-x-y-3xy}=\displaystyle\frac{G(x,y)}{H(x,y)}$$
and that the singularities of $\Gamma_{F_2}(x,y)$ are the points
$$\z' = (x,y) = \left(\displaystyle \frac{3 p-2 + 2 \sqrt{3 p^2-3 p+1}}{3 p}, \displaystyle\frac{1-3 p+2 \sqrt{3 p^2-3 p+1}}{3 (1-p)}\right)$$
whose coordinates are real numbers with positive coordinates.
Hence,
$$ (\z')^{-n\cdot\rr} = \left(\displaystyle \frac{2-3 p+ 2 \sqrt{3 p^2-3 p+1}}{p}, \displaystyle\frac{3 p-1+2 \sqrt{3 p^2-3 p+1}}{(1-p)}\right)^{n\cdot\rr} = e^{n\cdot\psi_{F_2}(\rr)}$$
We quickly check that the assumption of Theorem 5.1 from \cite{melczer2020invitation} are satisfied. In other words, we need to check that the partial derivative $\frac{\partial H}{\partial y}$ does not vanish at $\z'$ and that the matrix 
$\mathscr H$ from Equation (5.25) in \cite{melczer2020invitation} is non singular (with ${\bf w}=\z'$). Indeed, a direct computation gives 
$$\frac{\partial H}{\partial y}(x,y)=-1-3x$$
which is non zero at $\z'$. Now, the dimension $d$ being two, still with the notation of \cite{melczer2020invitation},
the matrix $\mathscr H$ is the scalar
\begin{align*}
\mathscr H&=
V_1 + V_1^2+U_{1,1} - 2V_1U_{1,2} + V^2_1U_{2,2}\\
&=\frac{x(1+3y)}{y(1+3x)}+\left(\frac{x(1+3y)}{y(1+3x)}\right)^2
+0-2\frac{x(1+3y)}{y(1+3x)}\frac{3xy}{y(1+3x)}+0\\
&=\frac{xy+3x^2y+3xy^2+x^2}{y^2(1+3x)^2}>0.
\end{align*}
Therefore, following (\cite{melczer2020invitation}, Equation 5.1) we get for rational $\rr$ 
\begin{equation}
    f_{n\cdot\rr} = ce^{n\cdot\psi_{F_2}(\rr)}n^{-\frac{1}{2}}\left(1+O\left(\frac{1}{n}\right)\right) \label{eqn:gq23}
\end{equation}
where $c = c(p)$ does not depend on $p$. In fact the results of \cite{GQ23} allows to have relation \eqref{eqn:gq23} when $\rr$ is irrational, only the left hand side should be replaced by the sum of the coefficients $\gamma_{\ii}$ in the uniformly bounded neighborhood of the point $n\rr.$ 

\section{Concluding remarks and open questions}\label{sec:open}
Finding of $\psi_{F_m}(\rr),$ where $\rr = (p,q,1-p-q)$ for free group $F_3$ of rank $3$ leads to the solving of polynomial equation of degree $4$ in variable $z = e^s$
\begin{multline}\label{eqn:deg_4}
3p^2z^4 + 4p(7p-2)z^3 + 2\left(33p^2-32pq-8p-32q^2+32q-8\right)z^2 \\+ 12p(5p-6)z - 45p^2=0
\end{multline}
and it can be solved in radicals. Substituting $p = q = \frac{1}{3}$ in \eqref{eqn:deg_4} we obtain that $s = \log 5$ and hence we get a value $\psi_{F_3}(\frac{1}{3},\frac{1}{3},\frac{1}{3}) = \log 5.$ So the multivariate growth in this case coincides with the ordinary growth and $\log 5$ is a maximal value of $\psi(\rr)$. The higher ranks $m = 4,5,\cdots$ lead to polynomial equations of degree $> 5$ and most probably obtaining of the precise analytic expressions for $\psi_{F_m}(\rr)$ is impossible. But at least we know that $\psi_{F_m}(\rr)$ is a real analytic concave function \cite{GQ23} with a maximum value $\log(2m-1)$ achieved at unique point $\rr = \left(\frac{1}{m},\cdots,\frac{1}{m}\right).$\\

Now, let us go back to cogrowth. It can be shown that the condition (CG) always hold for a subgroup $H < F_m$ and so the formula \eqref{eqn:psi_inf1} is applicable. If $H < F_m$ is a finitely generated subgroup then it is represented by a regular language \cite{DGS2021} and hence, its cogrowth and multivariate cogrowth series are rational.  The Conjecture claiming that $\Gamma_H(z), z \in \Co$ is rational if and only if $H$ is finitely generated was stated in \cite{DGS2021} and it is known that this conjecture is true in the case of normal subgroups. A similar conjecture can be stated for multivariate cogrowth series $\Gamma_H(\z), \z \in \Co^{m}.$
Also, we state the following:   
{\conj \label{conj:cogrowth_general} Let $N \vartriangleleft F_m.$ Then $F_m/N$ is amenable if and only if $\psi_{N}(\rr) = \psi_{F_m}(\rr).$}\\

By cogrowth criteria of amenability we know that in the case when $F_m/N$ is amenable, the relation
\begin{equation}
     \log(2m-1) = \max_{\rr} \psi_{F_m}(\rr) =  \max_{\rr} \psi_N(\rr) \label{eqn:log_Fm_H}
\end{equation}
hold.
It is unclear if $\psi_N(\rr)$ may have values less than the values of $ \psi_{F_m}(\rr)$ in the case when the Equation \eqref{eqn:log_Fm_H} holds. Even the case when $N = [F_2,F_2]$ is a commutator subgroup of $F_2$ deserves a separate consideration.\\

And finally, there is a formula in \cite{akemann_1976_MR442698} 
$$\chi(p) = 2 \displaystyle \min_t \left[ \sum_{i=1}^m \sqrt{t^2+p_i^2} - (m-1)t\right]$$
for the spectral radius $\chi(p)$ of a symmetric random walk on a free group $F_m$ given by a positive vector $p = (p_1,\cdots,p_m), 2\sum p_i = 1$ where $p(a_i) = p(a_i^{-1}) = p_i$. \\

Computation  of  $\chi(p)$  in  the  case  of  rank  2  leads  to  the equation  of  degree 4 in variable $x = t^2$
\begin{eqnarray}
    3 x^4 + 4\left(p_1^2 + p_2^2 \right) x^3 + 6 p_1^2 p_2^2 x^2 - 
 p_1^4 p_2^4 =0 \label{eqn:deg_8}
\end{eqnarray}
and hence $\chi(p)$ can be expressed in radicals. Taking $p_1=p_2 = \frac{1}{4}$, \eqref{eqn:deg_8} leads  to  the  equation
$$(1 + 16 x)^3 (-1 + 48 x) = 0$$
which  gives  a  value $\chi(p) = \displaystyle\frac{\sqrt{3}}{2}$.\\

The latter number is known since 1959 due to H. Kesten \cite{kesten1959random} who, in particular proved that for a simple random walk on $F_m$ the spectral radius $\chi = \displaystyle \frac{\sqrt{2m-1}}{m}.$ Higher rank leads to solving polynomial equations of degree $> 5$ and expressing $\chi(p)$ in radicals seems to be impossible for $F_m, m \geq 3$ and arbitrary $p.$\\

Let $H < F_m$ and $\chi_{F_m/H}(p)$ be a spectral radius of a random walk on a Schreier graph $\Lambda = \Lambda(F_m, H,\Sigma)$ given by probabilities $p_i, 1\leq i \leq m$.  We end up with the following question. 
\begin{que}\label{que:chi_varphi}
Is there a formula expressing $\chi_{F_m/H}$ via $\alpha_H(p)$, where $\alpha_H(p) = \varphi_H(p)$ is a multivariate growth of $\Delta_H(2p)$ in the direction prescribed by the vector $2p \in M_m$ ? Does such a formula exists when $H$ is normal subgroup in $F_m$ and hence $\Lambda = \Lambda(F_m, H,\Sigma)$ is a Cayley graph. 
\end{que}
\section{Acknowledgments} 
The first and the second author thank University of Geneva (Swiss NSF  grant 200020-200400). The third author acknowledges United States-India Educational Foundation(USIEF), New Delhi and U.S. Department of State for the Fulbright Nehru Postdoctoral Research Fellowship, Award No.2479/FNPDR/2019. Also he is grateful to S. P. Mandali, Pune and Government of Maharashtra for the approval of the study leave to
undertake the fellowship. Finally, he acknowledges the generous hospitality of Texas A \& M University, College Station, TX, USA during his visit when a big part of the project was realized.

\end{document}